 \makeatletter \@addtoreset{equation}{section}
\newtheorem{lem}{Lemma}[section]
\newtheorem{cor}[lem]{Corollary}
\newtheorem{teo}[lem]{Theorem}
\newtheorem{os}[lem]{Remark}
\newtheorem{prop}[lem]{Proposition}
\newcommand{\R}{\mathbb{R}}
\newcommand{\C}{\mathbb{C}}
\newcommand{\N}{\mathbb{N}}
\newcommand{\one}{\mathbbm{1}}
\newcommand{\ov}{\overline}
\newcommand{\supp}{\emph{supp\,}}
\begin{document}
\title{Schr\"odinger type operators with unbounded diffusion and potential terms}
\author{A. Canale}
\address{Dipartimento di Matematica, Universit\`a degli Studi di Salerno, Via Giovanni Paolo II, 132, I 84084 FISCIANO (Sa), Italy.}
\email{acanale@unisa.it}
\author{A. Rhandi}
\address{Dipartimento di Ingegneria dell'Informazione, Ingegneria Elettrica e Matematica Applicata, Universit\`a degli Studi di Salerno, Via Giovanni Paolo II, 132, I 84084 FISCIANO (Sa), Italy.}
\email{arhandi@unisa.it}
\author{C. Tacelli}
\address{Dipartimento di Ingegneria dell'Informazione, Ingegneria Elettrica e Matematica Applicata, Universit\`a degli Studi di Salerno, Via Giovanni Paolo II, 132, I 84084 FISCIANO (Sa), Italy.}
\email{ctacelli@unisa.it}

\maketitle

\begin{abstract}
We prove that the realization $A_p$ in $L^p(\R^N),\,1<p<\infty$, of the Schr\"odinger type operator $A=(1+|x|^{\alpha})\Delta-|x|^{\beta}$ with domain $D(A_p)=\{u\in W^{2,p}(\R^N): Au\in L^p(\R^N)\}$ generates a strongly continuous analytic semigroup provided that $N>2,\,\alpha >2$ and $\beta >\alpha -2$. Moreover this semigroup is consistent, 
irreducible, immediately compact and ultracontractive.
\end{abstract}

\section{Introduction}
In this paper we study the generation of analytic semigroups in $L^p$-spaces of Schr\"odinger type operators of the form
\begin{equation}\label{eq:operatore-A}
 Au(x)=a(x)\Delta u(x)-V(x)u(x), \quad x\in\R^N ,
\end{equation}
where $a(x)=1+|x|^\alpha$ and $V(x)=|x|^\beta$ with $\alpha >2$ and $\beta>\alpha -2$. We investigate also spectral properties of such semigroups.
In the case when $\alpha\in [0,2]$ and $\beta \ge 0$,
generation results of analytic semigroups for suitable realizations $A_p$ of the operator $A$ in $L^p(\R^N)$ have been proved in \cite{lor-rhan}.

For $\beta=0$ and $\alpha>2$, the generation results depend upon $N$ as it is
proved in \cite{met-spi2}. More specifically, if $N=1,2$ no realization of $A$ in $L^p(\R^N)$ generates a strongly continuous (resp. analytic) semigroup.
The same happens if $N\ge 3$ and $p\le N/(N-2)$.
On the other hand, if $N\ge 3$ and $p>N/(N -2)$, then the maximal realization $A_p$ of the operator $A$ in $L^p(\R^N)$
generates a positive analytic semigroup, which is also contractive if $\alpha\geq(p - 1)(N - 2)$.

Generation results concerning the case where $\beta=0$ and with drift terms of the form $|x|^{\alpha -2}x$ were obtained recently in \cite{met-spi-tac}.

Here we consider the case where $\alpha >2$ and assume that $N>2$. Let us
denote by $A_p$ the realization of $A$ in $L^p(\R^N)$ endowed with its maximal domain
\begin{equation}
D_{p,max}(A)=\{ u\in L^p(\R^N) \cap W^{2,p}_{loc}(\R^N)\,:\;Au\in L^p(\R^N) \}.
\end{equation}
In the main result of the paper we prove that, for any $1<p<\infty$, the realization $A_p$ of $A$ in $L^p(\R^N)$, with domain
\begin{eqnarray*}
D(A_p)=\left\{u\in W^{2,p}(\R^N): Au\in L^p(\R^N)\right\},
\end{eqnarray*}
generates a positive strongly continuous and analytic semigroup $(T_p(t))_{t\ge 0}$ for any $\beta>\alpha -2$.
This semigroup is also consistent, irreducible, immediately compact and ultracontractive.

The paper is structured as follows. In Section 2 we study the invariance of $C_0(\R^N)$ under the semigroup generated by $A$ in $C_b(\R^N)$ and show its compactness. In Section 3 we use reverse H\"older classes and some results in \cite{shen-1995} to study the solvability of the elliptic problem in $L^p(\R^N)$. Then, in Section 4 we prove the generation results.

\bigskip\noindent
{\large \bf Notation.}
For any $k\in\N\cup\{\infty\}$ we denote by $C^{k}_c(\R^N)$ the set of all
functions $f:\R^N\to\R$ that are continuously differentiable in $\R^N$ up to $k$-th order
and have compact support (say ${\rm supp}(f)$). The space $C_b(\R^N)$ is the set of all
bounded and continuous functions $f:\R^N\to
\R$, and we denote by $\|f\|_{\infty}$ its sup-norm, i.e., $\|f\|_{\infty}=\sup_{x\in\R^N}|f(x)|$. We use also the space $C_0(\R^N):=\{f\in C_b(\R^N): \lim_{|x|\to \infty}f(x)=0\}.$
If $f$ is smooth enough we set
\begin{eqnarray*}
|\nabla f(x)|^2=\sum_{i=1}^N|D_if(x)|^2,\qquad
|D^2f(x)|^2=\sum_{i,j=1}^N|D_{ij}f(x)|^2.
\end{eqnarray*}
For any $x_0\in\R^N$ and any $r>0$ we denote by $B(x_0,r)\subset\R^N$ the open ball, centered at $x_0$ with radius $r$.
We simply write $B(r)$ when $x_0=0$.
The function $\chi_E$ denotes the characteristic function of the (measurable) set $E$,
i.e., $\chi_E(x)=1$ if $x\in E$, $\chi_E(x)=0$ otherwise.

For any $p\in [1,\infty)$ and any positive measure $d\mu$, we simply write $L^p_{\mu}$ instead of $L^p(\R^N,d\mu)$.
The Euclidean inner product in $L^2_{\mu}$ is denoted by $(\cdot,\cdot)_{\mu}$.
In the particular case when $\mu$ is the Lebesgue measure, we keep the classical notation $L^p(\R^N)$ for any $p\in [1,\infty)$.
Finally, by $x\cdot y$ we denote the Euclidean scalar product of the vectors $x,y\in\R^N$.

\section{Generation of semigroups in $C_0(\R^N)$}\label{sc:c0}

In this section we recall some properties of the elliptic and parabolic problems associated with  $A$
in $C_b(\R^N)$. We prove the existence of a Lyapunov function for $A$ in the case where $\alpha >2$ and $\beta >\alpha -2$. This implies the uniqueness of the solution semigroup $(T(t))_{t\ge 0}$ to the associated parabolic problem. Using a dominating argument, we show that $T(t)$ is compact and $T(t)C_0(\R^N)\subset C_0(\R^N)$.

First, we endow $A$ with its maximal domain in $C_b(\R^N)$
\[
D_{max}(A)=\{u\in C_b(\R^N)\cap W^{2,p}_{loc}(\R^N),\quad 1\le p<\infty:\ Au\in C_b(\R^N)\}.
\]
Then, we consider for any $\lambda >0$ and $f\in C_b(\R^N)$ the elliptic equation
\begin{equation}\label{eq:C0-elliptic-problem}
\lambda u -Au=f.
\end{equation}
It is well-known that equations \eqref{eq:C0-elliptic-problem} admit at least a solution in $D_{max}(A)$
(see \cite[Theorem 2.1.1]{ber-lor}). A solution is obtained as follows.\\
Take the unique solution to the Dirichlet problem associated with $\lambda -A$
into the balls $B(0,n)$ for $n\in \N$. Using an Schauder interior estimates one can prove that
the sequence of solutions so obtained converges to a solution $u$ of
\eqref{eq:C0-elliptic-problem}. It is also known that a solution to \eqref{eq:C0-elliptic-problem} is in general not unique. The solution $u$,
which we obtained by approximation, is nonnegative whenever $f\ge 0$.

As regards the parabolic problem
\begin{equation}     \label{problem}
\left\{
\begin{array}{ll}
u_t(t,x)=Au(t,x)& x\in\R^N,\ t>0, \\
u(0,x)=f(x)   &x\in\R^N\;, \end{array}\right.
\end{equation}
where $f\in C_b(\R^N)$,
it is well-known that one can associate a semigroup $(T(t))_{t\ge 0}$ of bounded operator in $C_b(\R^N)$
such that $u(t,x)=T(t)f(x)$ is a solution of \eqref{problem} in the following sense:
$$u\in C([0,+\infty)\times \R^N)\cap C_{loc}^{1+\frac{\sigma}{2},2+\sigma}((0,+\infty)\times \R^N)$$
and $u$ solves \eqref{problem} for any $f\in C_b(\R^N)$ and some $\sigma \in (0,1)$.
Uniqueness of solutions to \eqref{problem} in general is not guaranteed. Moreover the
semigroup $(T(t))_{t\ge 0}$ is not strongly continuous in $C_b(\R^N)$ and do not preserve in general the space $C_0(\R^N)$. 
For more details we refer to \cite[Section 2]{ber-lor}.
%
%

Uniqueness is obtained if there
exists a positive function  $\varphi(x)\in C^2(\R^N)$, called {\it Lyapunov function}, such that $\lim_{|x|\to \infty}\varphi(x)=+\infty$
and $A\varphi-\lambda \varphi\leq 0$ for some $\lambda>0$.

\begin{prop}
Let $N>2$, $\alpha>2$ and $\beta>\alpha-2$. Let $\varphi=1+|x|^\gamma$ where $\gamma>2$ then there existence constant $C>0$ such that
\[
 A\varphi \leq C \varphi
\]
\end{prop}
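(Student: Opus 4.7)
The plan is a direct computation on the radial function $\varphi(x)=1+|x|^\gamma$, using the structure of $A=(1+|x|^\alpha)\Delta-|x|^\beta$.

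First, since $\varphi$ is radial, a short calculation gives
\begin{equation*}
\Delta\varphi(x)=\gamma(N+\gamma-2)|x|^{\gamma-2},
\end{equation*}
so that
\begin{equation*}
A\varphi(x)=\gamma(N+\gamma-2)|x|^{\gamma-2}+\gamma(N+\gamma-2)|x|^{\alpha+\gamma-2}-|x|^\beta-|x|^{\beta+\gamma}.
\end{equation*}

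Second, I would isolate the leading behavior at infinity. The exponents of the two positive terms are $\gamma-2$ and $\alpha+\gamma-2$, while the two negative terms carry exponents $\beta$ and $\beta+\gamma$. The hypothesis $\beta>\alpha-2$ gives $\beta+\gamma>\alpha+\gamma-2$ and also $\beta>\gamma-2$ (since $\gamma>2$ and in any event $\beta>\alpha-2>0$). Hence the term $-|x|^{\beta+\gamma}$ eventually dominates each of the positive contributions, so there exists $R>0$ such that
\begin{equation*}
A\varphi(x)\le 0\quad \text{for all } |x|\ge R.
\end{equation*}
In particular, on $\{|x|\ge R\}$ one trivially has $A\varphi\le\varphi$ since $\varphi>0$.

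Third, on the complementary set $\overline{B(R)}$ the function $A\varphi$ is continuous and bounded, while $\varphi\ge 1$ is bounded below by a positive constant. Therefore
\begin{equation*}
C:=\max\left\{1,\ \sup_{|x|\le R}\frac{A\varphi(x)}{\varphi(x)}\right\}
\end{equation*}
is a finite positive constant, and the inequality $A\varphi\le C\varphi$ holds on all of $\R^N$.

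There is really no hard step here: it is a routine compute-and-compare argument. The only thing to watch is that I use $\beta>\alpha-2$ in full, rather than just to control the second positive term, so as to ensure the exponent comparison $\beta+\gamma>\alpha+\gamma-2$ that makes $-|x|^{\beta+\gamma}$ the dominant term at infinity. No independent smallness condition on $\gamma$ beyond $\gamma>2$ is used; the choice $\gamma>2$ only guarantees that $\varphi$ is a genuine Lyapunov function in the sense required above (positive, $C^2$, diverging at infinity).
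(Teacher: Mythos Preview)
Your argument is correct and follows essentially the same route as the paper's proof: compute $A\varphi$ explicitly, observe that the hypothesis $\beta>\alpha-2$ makes the highest-order term negative, and conclude by continuity on a bounded region. One minor slip: the claim ``also $\beta>\gamma-2$'' is neither justified by your parenthetical nor true in general (take $\alpha=3$, $\beta=2$, $\gamma=10$), but it is also unnecessary---what you actually use is that $\beta+\gamma>\alpha+\gamma-2$ and $\beta+\gamma>\gamma-2$, so that $-|x|^{\beta+\gamma}$ dominates both positive terms, and both of these inequalities do follow from $\beta>\alpha-2>0$.
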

\begin{proof}
An easy computation gives
\[
 A\varphi=\gamma (N+\gamma-2)(1+|x|^\alpha)|x|^{\gamma-2}-(1+|x|^\gamma)|x|^\beta\;,
\]
then since $\beta>\alpha-2$ there existence $C>0$ such that
\[
\gamma (N+\gamma-2)(1+|x|^\alpha)|x|^{\gamma-2}\leq (1+|x|^\gamma)|x|^\beta+C(1+|x|^\gamma).
\]
\end{proof}

Then we can assert that problem $\eqref{problem}$ admits an unique solution in $C([0,\infty)\times \R^N)\cap C^{1,2}((0,\infty)\times \R^N)$
and problem \eqref{eq:C0-elliptic-problem} admits an unique solutions in $D_{max}(A)$.

In order to investigate the compactness of the semigroup and the invariance of $C_0(\R^N)$ we
check the behaviour of  $T(t)\one$. We use the following result (see \cite[Theorem 5.1.11]{ber-lor}).
\begin{teo}
Let us fix $t>0$. Then $T(t)\one\in C_0(\R^N)$ if and only if $T(t)$ is compact and $C_0(\R^N)$ is invariant for $T(t)$.
\end{teo}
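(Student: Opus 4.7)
The plan rests on two structural features of the semigroup $(T(t))_{t\ge 0}$: it is positive, and it admits an integral kernel representation
\[
T(t)f(x)=\int_{\R^N}p(t,x,y)f(y)\,dy,\qquad p(t,\cdot,\cdot)\ge 0,
\]
obtained as the monotone limit of the Dirichlet Green functions on $B(0,n)$ used to construct $T(t)$. Positivity immediately yields the pointwise domination
\[
|T(t)f(x)|\le \|f\|_\infty\,T(t)\one(x),\qquad x\in\R^N,\ f\in C_b(\R^N),
\]
and this single inequality drives both implications.

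For the ``$\Rightarrow$'' direction, assume $T(t)\one\in C_0(\R^N)$. Given $f\in C_0(\R^N)$, the domination gives $|T(t)f(x)|\le\|f\|_\infty\, T(t)\one(x)\to 0$ as $|x|\to\infty$, while continuity of $T(t)f$ is part of the construction of the semigroup; hence $T(t)C_0(\R^N)\subset C_0(\R^N)$. For compactness, take a bounded sequence $(f_n)$ in $C_b(\R^N)$. Interior parabolic Schauder estimates applied to $u_n(s,x)=T(s)f_n(x)$ on any cylinder $(t/2,t]\times B(0,R)$ yield uniform H\"older bounds, hence equicontinuity on compact sets, so a diagonal Arzel\`a--Ascoli extraction produces $T(t)f_{n_k}\to g$ locally uniformly. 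The same pointwise bound gives the uniform tail estimate $|T(t)f_{n_k}(x)|\le M\,T(t)\one(x)$ with $M=\sup_k\|f_k\|_\infty$, and by hypothesis the right-hand side vanishes at infinity, upgrading local uniform convergence to uniform convergence on all of $\R^N$.

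For the ``$\Leftarrow$'' direction, pick $\phi_n\in C_c^\infty(\R^N)$ with $0\le\phi_n\nearrow\one$ pointwise. Each $\phi_n\in C_0(\R^N)$, so the invariance hypothesis gives $T(t)\phi_n\in C_0(\R^N)$. The bounded sequence $(\phi_n)$ is mapped by the compact operator $T(t)$ to a relatively compact set in $C_b(\R^N)$, so some subsequence $T(t)\phi_{n_k}$ converges uniformly to a limit $g$. Monotone convergence applied to the kernel representation identifies the pointwise limit as $T(t)\one$, hence $g=T(t)\one$. Since $C_0(\R^N)$ is closed in $C_b(\R^N)$, we conclude $T(t)\one\in C_0(\R^N)$.

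The main obstacle I anticipate is the compactness assertion in the forward direction: it requires not only the local equicontinuity (supplied by Schauder estimates on the approximating Dirichlet problems and then transferred to the limit) but also a uniform decay at infinity, which is exactly what the hypothesis $T(t)\one\in C_0(\R^N)$ encodes. One must also keep in mind that $(T(t))_{t\ge 0}$ is not strongly continuous on $C_b(\R^N)$, so the whole argument must be carried out for a single fixed operator $T(t)$ rather than for the semigroup as a family, which is why the conclusion is formulated pointwise in $t$.
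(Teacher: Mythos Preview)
The paper does not supply its own proof of this statement: it is quoted verbatim from \cite[Theorem 5.1.11]{ber-lor} and used as a black box to deduce compactness and $C_0$-invariance of $(T(t))_{t\ge 0}$ from the single fact $T(t)\one\in C_0(\R^N)$. So there is no proof in the paper to compare against.

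That said, your argument is correct and is essentially the proof one finds in the cited reference. The forward implication uses exactly the two ingredients that make the result work for this class of semigroups: the pointwise domination $|T(t)f|\le\|f\|_\infty T(t)\one$ coming from positivity (which gives both the $C_0$-invariance and the uniform tail control needed to pass from local to global uniform convergence), and the interior Schauder estimates on the approximating Dirichlet problems (which give the local equicontinuity feeding Arzel\`a--Ascoli). The backward implication via a monotone approximation $\phi_n\nearrow\one$ with $\phi_n\in C_c^\infty(\R^N)$, compactness to extract a uniformly convergent subsequence, and monotone convergence on the kernel to identify the limit, is likewise the standard route. Your remark that the argument must be run for a fixed $t$ because the semigroup is not strongly continuous on $C_b(\R^N)$ is well taken and matches the formulation in \cite{ber-lor}.
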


Let $A_0$ be the operator defined by $A_0:=a(x)\Delta$.
By \cite[Example 7.3]{met-wack} or \cite[Proposition 2.2 (iii)]{met-spi2}, we have that
the minimal semigroup $(S(t))$ is generated by $(A_0,D_{max}(A_0))\cap C_0 (\R^N)$. Moreover
the resolvent and the semigroup map $C_b(\R^N)$ into $C_0 (\R^N )$ and are compact.

Set $v(t,x)=S(t)f(x)$ and $u(t,x)=T(t)f(x)$ for $t>0,\,x\in \R^N$ and $0\le f\in C^b(\R^N)$. Then the function $w(t,x)=v(t,x)-u(t,x)$ solves
$$\left\{\begin{array}{ll}
w_t(t,x)=A_0w(t,x)+V(x)u(t,x),\quad t>0,\\
w(0,x)=0,\quad x\in \R^N.
\end{array} \right.$$
So, applying \cite[Theorem 4.1.3]{ber-lor}, we have $w\ge 0$ and hence $T(t)\le S(t)$. Thus, $T(t)\one \in C_0(\R^N)$, since $S(t)\one \in C_0(\R^N)$ for any $t>0$ (see \cite[Proposition 2.2 (iii)]{met-spi2}). Thus, $T(t)$ is compact and $C_0(\R^N)$ is invariant for $T(t)$ (cf. \cite[Theorem 5.1.11]{ber-lor}).
Then we have proved the following proposition
\begin{prop}\label{pr:semigroup-C0}
The semigroup $(T(t))$ is generated by $(A,D_{max}(A))\cap C_0 (\R^N)$, maps $C_b(\R^N)$ into $C_0(\R^N)$, and is compact.
\end{prop}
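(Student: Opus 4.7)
The proposition packages three claims: compactness of $T(t)$, the mapping property $T(t)C_b(\R^N)\subset C_0(\R^N)$, and the identification of the generator on $C_0(\R^N)$. Compactness of $T(t)$ and the invariance $T(t)C_0(\R^N)\subset C_0(\R^N)$ have essentially already been obtained in the paragraph preceding the statement, via the pointwise domination $T(t)\le S(t)$ and \cite[Theorem 5.1.11]{ber-lor}; my plan is to simply quote these. The mapping property then follows in one line from positivity: for $f\in C_b(\R^N)$,
\[
|T(t)f(x)|\le T(t)|f|(x)\le \|f\|_\infty\, T(t)\one(x),
\]
and since $T(t)\one\in C_0(\R^N)$, we conclude $T(t)f\in C_0(\R^N)$ for every $t>0$.

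The real content is the identification of the generator. I would set $T_0(t):=T(t)|_{C_0(\R^N)}$ and first check that $(T_0(t))$ is a strongly continuous contraction semigroup on $C_0(\R^N)$. Contractivity is immediate from positivity and $T(t)\one\le\one$. For strong continuity at $t=0$, I would start with $f\in C_c^\infty(\R^N)$, invoke the parabolic regularity recalled in Section~\ref{sc:c0} to get $T(t)f\to f$ locally uniformly as $t\to 0^+$, and upgrade this to uniform convergence on $\R^N$ by using the bound $|T(t)f(x)|\le\|f\|_\infty T(t)\one(x)$ to control the tails, together with the equicontinuity-at-infinity of $T(\cdot)\one$ on bounded time intervals, inherited through the domination $T(t)\one\le S(t)\one\in C_0(\R^N)$. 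Density of $C_c^\infty(\R^N)$ in $C_0(\R^N)$ combined with the equiboundedness $\|T_0(t)\|\le 1$ then yields strong continuity of $(T_0(t))$.

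Once $(T_0(t))$ is known to be a $C_0$-semigroup with generator $(B,D(B))$, I would match $B$ with $A$ on $D_{max}(A)\cap C_0(\R^N)$ by a standard resolvent argument. For any $\lambda>0$ and $f\in C_0(\R^N)$, the function
\[
u=R(\lambda,B)f=\int_0^{\infty}e^{-\lambda t}T_0(t)f\,dt
\]
solves $\lambda u-Au=f$ classically, by local parabolic/elliptic regularity applied inside balls, hence lies in $D_{max}(A)\cap C_0(\R^N)$. Conversely, any $v\in D_{max}(A)\cap C_0(\R^N)\subset D_{max}(A)$ solving $(\lambda-A)v=f$ must coincide with $u$ by the uniqueness in $D_{max}(A)$ delivered by the Lyapunov function $\varphi=1+|x|^\gamma$ of the proposition above. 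Therefore $D(B)=D_{max}(A)\cap C_0(\R^N)$ and $B=A$ on that domain. The step I expect to be most delicate is the strong-continuity upgrade at $t=0$: passing from pointwise or locally uniform convergence to uniform convergence on $\R^N$ hinges on the uniform-in-$t$ tail decay provided by the comparison $T(t)\one\le S(t)\one$, whereas everything else distills into positivity, interior regularity, and the Lyapunov-based uniqueness.
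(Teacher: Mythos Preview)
Your treatment of compactness and of the mapping $T(t):C_b(\R^N)\to C_0(\R^N)$ coincides with the paper's: both follow from $T(t)\one\in C_0(\R^N)$, which was obtained from the domination $T(t)\le S(t)$ and \cite[Theorem~5.1.11]{ber-lor}. For the generator identification the paper does not argue explicitly at all; the sentence ``Then we have proved the following proposition'' relies on the general theory in \cite{ber-lor} (invariance of $C_0(\R^N)$ together with the Lyapunov function and the identification of the weak generator give that the restriction to $C_0(\R^N)$ is a $C_0$--semigroup whose generator is the part of $(A,D_{max}(A))$ in $C_0(\R^N)$). Your choice to spell this out via the resolvent is reasonable and more self-contained.

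There is, however, a genuine gap in your strong-continuity step, precisely at the point you flag as ``most delicate''. The tail control you propose,
\[
|T(t)f(x)|\le \|f\|_\infty\,T(t)\one(x)\le \|f\|_\infty\,S(t)\one(x),
\]
cannot yield the required uniform-in-$t$ smallness near $t=0$: since $S(0)\one=T(0)\one=\one$, for any fixed $R$ one has $\sup_{|x|>R}S(t)\one(x)\to 1$ as $t\to 0^+$, so there is no ``equicontinuity-at-infinity'' on time intervals containing $0$. The domination $T(t)\one\le S(t)\one$ is useful for showing $T(t)\one\in C_0(\R^N)$ at each fixed $t>0$, but it gives nothing at $t=0$. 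A correct replacement is short: for $f\in C_c^\infty(\R^N)\subset D_{max}(A)$ one has pointwise
\[
T(t)f(x)-f(x)=\int_0^t T(s)(Af)(x)\,ds,
\]
whence $\|T(t)f-f\|_\infty\le t\,\|Af\|_\infty\to 0$. Density of $C_c^\infty(\R^N)$ in $C_0(\R^N)$ and the contractivity $\|T_0(t)\|\le 1$ then give strong continuity on all of $C_0(\R^N)$. With this fix your resolvent matching goes through, and your argument becomes a legitimate (and more detailed) alternative to the paper's appeal to \cite{ber-lor}.
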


\section{Solvability of the elliptic problem in $L^p(\R^N)$}

In this section we study the existence and uniqueness of the elliptic problem $\lambda u-A_pu=f$ for a given $f\in L^p(\R^N),\,1<p<\infty$ and $\lambda \ge 0$. Let us consider first the case $\lambda =0$.

We note that the equation $(1+|x|^\alpha)\Delta u-Vu=f$ is equivalent to the equation
\[
 \Delta u-\frac{V}{1+|x|^\alpha}u=\frac {f}{1+|x|^{\alpha}}=:\tilde f.
\]
Therefore we focus our attention to the $L^p$-realization $\tilde A_p$ of the Schr\"odinger operator
\[
\tilde A =\Delta -\frac{V}{1+|x|^\alpha}=\Delta - \tilde V\;.
\]
Here $0\le \tilde V \in L^1_{\rm loc}(\R^N)$. So, by standard results, it follows that $0\in \rho(\tilde A_p)$ and
\begin{equation}\label{eq:A-tilde-inversa}
(-\tilde A_p)^{-1}\tilde f(x)=\int _{\R^N}G(x,y)\tilde f(y)dy,
\end{equation}
where $G$ denotes the Green function of $\tilde A_p$ which is given by its heat kernel $\tilde p$
$$G(x,y)=\int_0^\infty \tilde p(t,x,y)\,dt.$$
Thus the function
$u=\int _{\R^N}G(x,y)\frac {f(y)}{1+|y|^\alpha}dy \in D_{p,max}(A)$ and
solves $A_pu=f$ for every $f\in L^p(\R^N)$.
So we have to study the operator
\begin{equation}\label{eq:operatore-L}
u(x)=Lf(x):=\int_{\R^N}G(x,y)\frac{f(y)}{1+|y|^\alpha}dy.
\end{equation}
 To this purpose, we use the bounds of $G(x,y)$ obtained in \cite{shen-1995}
when the potential of $\tilde A_p$ belongs to the reverse H\"older class $B_q$ for some $q\geq N/2$ .

We recall that a nonnegative locally $L^q$-integrable function $V$ on $\R^N$ is
said to be in $B_q,\,1 < q < \infty$, if there exists $C > 0$ such that the reverse
H\"older inequality
\[
\left( \frac{1}{|B|}\int_BV^q(x) dx \right) ^{1/q}\leq C\left( \frac{1}{|B|}\int_BV(x) dx \right)
\]
holds for every ball $B$ in $\R^N$. A nonnegative function $V\in L_{\rm loc}^\infty(\R^N)$ is in
$B_\infty $ if
\[
\|V\|_{L^\infty(B)}\leq C\left( \frac{1}{|B|}\int_BV(x) dx \right)
\]
for any ball $B$ in $\R^N$.

One can easily verify that
\begin{equation}\label{shen-B}
\tilde V\in \left\{ \begin{array}{ll}
B_\infty \quad \hbox{\ if }\beta -\alpha \ge 0 \\
B_q \quad \,\,\hbox{\ if }\beta -\alpha>-\frac{N}{q} \\
B_{\frac{N}{2}} \quad \hbox{\ if }\beta -\alpha >-2 \\
B_N \quad \hbox{\ if }\beta -\alpha >-1
\end{array} \right.
\end{equation}
for some $q>1$. So, it follows from \cite[Theorem 2.7]{shen-1995} that, if $\beta -\alpha >-2$ then for any $k>0$
there is some constant $C_k>0$ such that for any $x,\,y\in \R^N$
\begin{equation}\label{eq:stima-G-shen}
|G(x,y)|\leq\frac{C_k}{\left( 1+m(x)|x-y| \right)^k}\cdot \frac{1}{|x-y|^{N-2}},
\end{equation}
where the function $m$ is defined by
\begin{equation}\label{eq:def-m}
\frac{1}{m(x)}:=\sup_{r>0}\left\{r\,:\,\frac{1}{r^{N-2}} \int_{B(x,r)}\tilde V(y)dy\leq 1 \right\},\quad x\in R^N.
\end{equation}
Due to the importance of the auxiliary function $m$ we give a lower bound.

\begin{lem}\label{pr:mx-beta-le-alpha}
Let $\alpha-2<\beta<\alpha$. There exists $C=C(\alpha,\beta,N)$ such that
\begin{equation}\label{eq:stima-m-beta-le-alpha}
m(x)\geq  C \left( 1+|x|\right)^{\frac{\beta-\alpha}{2}}.
\end{equation}
\end{lem}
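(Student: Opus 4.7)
The plan is to translate the claim into an upper bound on $1/m(x)$. Setting $\phi_x(r) := r^{2-N}\int_{B(x,r)}\tilde V(y)\,dy$ with $\tilde V(y) = |y|^\beta/(1+|y|^\alpha)$, it suffices to exhibit a constant $C > 0$ such that $\phi_x(r) > 1$ for every $r \geq r_\ast(x) := C^{-1}(1+|x|)^{(\alpha-\beta)/2}$. Note that $\phi_x$ is continuous on $(0,\infty)$, vanishes as $r\to 0^+$, and tends to $+\infty$ as $r\to\infty$ (since $\beta > \alpha - 2$ and $N > 2$ force $\tilde V\notin L^1(\R^N)$), so the supremum defining $1/m(x)$ is finite and the reduction above is equivalent to the desired inequality.

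The hypothesis $\alpha - 2 < \beta < \alpha$ yields $(\alpha-\beta)/2\in (0,1)$, so $r_\ast(x)\ll|x|$ for $|x|$ large. I would split the range $r \geq r_\ast(x)$ into three regimes that cover $[r_\ast(x),\infty)$ once $|x|$ is large: (A) $r_\ast(x) \leq r \leq |x|/2$, where $B(x,r)$ is trapped inside the annulus $\{|x|/2 \leq |y| \leq 3|x|/2\}$, so $\tilde V(y) \geq c_1 |x|^{\beta-\alpha}$ and $\phi_x(r) \geq c_2 |x|^{\beta-\alpha} r^2 \geq c_3 C^{-2}$, which exceeds $1$ for $C$ sufficiently small; (B) $|x|/2 \leq r \leq 2|x|$, where $B(x,r)\supset B(x,|x|/2)$ and the same annular bound on $\tilde V$ over this half-ball yields $\phi_x(r) \geq c_4 |x|^{2+\beta-\alpha}$, which exceeds $1$ for $|x|$ large since $2+\beta-\alpha > 0$; and (C) $r \geq 2|x|$, where $B(x,r)\supset B(0,r/2)$ and integrating over $\{r/4 \leq |y| \leq r/2\}$ (on which $\tilde V \geq c_5 r^{\beta-\alpha}$ for $r$ large) gives $\phi_x(r) \geq c_6 r^{2+\beta-\alpha}\to\infty$.

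For $|x|$ bounded the claim reduces to $m(x)\geq$ a positive constant, and this follows from regime (C) alone: it furnishes a universal $R$ with $\phi_x(r) > 1$ for every $r \geq R$ uniformly on $\{|x|\leq 1\}$, so $1/m(x) \leq R$ there. Combining with the estimate $1/m(x) \leq r_\ast(x)$ for large $|x|$ yields the stated lower bound on $m(x)$ for all $x\in\R^N$.

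The main obstacle is regime (A): it is the only one in which the exponent $(\alpha-\beta)/2$ emerges naturally, through the balance $|x|^{\beta-\alpha}r^2\asymp 1$, and the condition $\beta > \alpha-2$ is used precisely to guarantee $r_\ast(x)\ll|x|$, keeping $B(x,r)$ inside the plateau $\{|y|\approx|x|\}$ where $\tilde V$ admits a clean pointwise lower bound from below. The other two regimes are soft in comparison; a minor bookkeeping point is selecting a single constant $C$ that works uniformly in $x$.
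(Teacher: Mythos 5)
Your argument is correct, but it takes a genuinely different route from the paper's. The paper does not estimate $f_x(r)=r^{2-N}\int_{B(x,r)}\tilde V(y)\,dy$ for every $r$ beyond the threshold; instead it exploits the reverse H\"older property $\tilde V\in B_{N/2}$ to bound $f_x(r)$ from below by a constant multiple of $\bigl(\int_{B(x,r)}\tilde V^{N/2}\,dy\bigr)^{2/N}$, a quantity that is \emph{monotone increasing} in $r$. This reduces the whole problem to verifying the inequality at the single radius $r_0=\delta(1+|x|)^{(\alpha-\beta)/2}$, which is then done with the pointwise bound $\tilde V\ge \tfrac12|y|^{\beta-\alpha}$ off the unit ball. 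Your proposal replaces this reverse H\"older input with a three-regime case analysis in $r$ (ball trapped in the annulus $|y|\approx|x|$, ball comparable to $|x|$, ball swallowing the origin), each regime handled by a direct pointwise lower bound on $\tilde V$ over a suitable subregion. Your version is more elementary --- it never invokes Shen's $B_q$ machinery for this lemma --- at the price of more case-checking; the paper's monotonicity trick disposes of all large $r$ in one stroke but imports the $B_{N/2}$ hypothesis. Two cosmetic points in your write-up: the parenthetical ``$\tilde V\notin L^1$ forces $\phi_x(r)\to\infty$'' is not by itself a proof, because of the decaying prefactor $r^{2-N}$ (it is your regime (C) computation, using $\beta-\alpha+2>0$, that actually yields the divergence); and the uniform bound for bounded $|x|$ should be stated on $\{|x|\le R_0\}$, where $R_0$ is the threshold beyond which regimes (A)--(B) kick in, rather than only on $\{|x|\le 1\}$ --- since $(1+|x|)^{(\beta-\alpha)/2}$ is bounded above and below on any such compact set, this changes nothing.
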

\begin{proof} Fix $x\in \R^N$, and set $f_x(r)=\frac{1}{r^{N-2}}\int _{B(x,r)}\tilde V(y)dy,\,r>0$.
Since $\tilde V\in B_{N/2}$ implies $V\in B_q$ for some $q>\frac{N}{2}$, by \cite[Lemma 1.2]{shen-1995}, we have
\[
\lim_{r \to 0}f_x(r)=0\,\text{ and } \lim_{r \to \infty}f_x(r)=\infty .
\]
Thus, $0<m(x)<\infty$.
\\
In order to estimate $\frac{1}{m(x)}$ we need to find $r_0=r_0(x)$ such that $r \in [r_0,\infty[$ implies $f_x(r)\geq 1$. In this case we will have  $\frac{1}{m(x)}\leq r_0$.

Since $\tilde V\in B_{N/2}$, there exists a constant $C_1$ depending only $\alpha,\beta,N$ such that
\[
\left( \frac{1}{|B|}\int_B\tilde V^{N/2}(y) dy \right) ^{2/N}\leq C_1\left( \frac{1}{|B|}\int_B\tilde V(y)\, dy \right)
\]
for any ball $B$ in $\R^N$. Then we have
\begin{eqnarray*}
f_x(r) &=& \sigma_{N-1} r^2\frac{1}{|B(x,r)|} \int_{B(x,r)}\tilde V(y) dy\\
&\geq & \frac{\sigma_{N-1} r^2}{C_1}
\left( \frac{1}{|B(x,r)|}\int_{B(x,r)}\tilde V(y)^{N/2} dy \right) ^{2/N}\\
&=& \frac{\sigma_{N-1}^{1-2/N}}{C_1}\left( \int_{B(x,r)}\tilde V(y)^{N/2} dy \right) ^{2/N}.
\end{eqnarray*}
Hence, if
\begin{equation}\label{eq:intbv-1}
\int_{B(x,r)}\tilde V(y)^{N/2} dy  - C_2\geq 0\,,
\end{equation}
then $f_x(r)\ge 1$, where $C_2=C_2(\alpha,\beta,N)=\frac{C^{N/2}_1}{\sigma_{N-1}^{N/2-1}}$.
Note that $\tilde V\geq \tilde V^*$ in $\R^N\setminus B(0,1)$ with $\tilde V^*(x)=\frac{1}{2} |x|^{\beta -\alpha}$. Hence,
\begin{eqnarray}\label{eq:int_BV-2}
\int_{B(x,r)}\tilde V(y)^{N/2} dy &\geq & \int_{B(x,r)\setminus B(0,1)}\tilde V(y)^{N/2} dy\geq \int_{B(x,r)\setminus B(0,1)}\tilde V^*(y)^{N/2} dy \nonumber\\
&=& \int_{B(x,r)}\tilde V^*(y)^{N/2}dy-\int_{B(x,r)\cap B(0,1)}\tilde V^*(y)^{N/2}dy \nonumber \\
&\geq & \int_{B(x,r)}\tilde V^*(y)^{N/2}dy-\int_{B(0,1)}\tilde V^*(y)^{N/2}dy\nonumber\\
&=& \int_{B(x,r)}(\tilde V^*)(y)^{N/2}dy-\frac{2^{1-N/2}\sigma_{N-1}}{N(2-\alpha+\beta)}\nonumber \\
&\geq & \sigma_{N-1}r^N\inf_{B(x,r)}(\tilde V^*)^{N/2}-C_3(\alpha,\beta,N)\\
&=& \sigma_{N-1}\frac{r^N}{\left( |x|+r \right)^{\frac{\alpha-\beta}{2}N}}-C_3.
\end{eqnarray}
Let $\eta=\frac{\alpha-\beta}{2}<1$ and $\delta>0$ a parameter to be choose later, and set
\[
 r_0=\delta (1+|x|)^\eta\;.
\]
By \eqref{eq:int_BV-2} condition \eqref{eq:intbv-1}  became
\begin{eqnarray*}
\int_{B(x,r_0)}\tilde V(y)^{N/2} dy  - C_2 &\geq & \sigma_{N-1}\frac{r_0^N}{\left( |x|+r_0 \right)^{\frac{\alpha-\beta}{2}N}}-C_2-C_3\\
&=& \sigma_{N-1} \frac{\delta^N (1+|x|)^{\eta N}}{\left(|x|+\delta (1+|x|)^\eta\right)^{\frac{\alpha-\beta}{2}N}}-C_4\\
&\geq & \sigma_{N-1} \frac{\delta^N (1+|x|)^{\eta N}}{\left(1+|x|+\delta (1+|x|)^\eta\right)^{\frac{\alpha-\beta}{2}N}}-C_4\\
&\geq & \sigma_{N-1} \frac{\delta^N (1+|x|)^{\eta N}}{\left((\delta+1) (1+|x|)\right)^{\frac{\alpha-\beta}{2}N}}-C_4\\
&=&
  \sigma_{N-1} \left( \frac{\delta }{(1+\delta)^{\frac{\alpha-\beta}{2}}}\right)^{N}-C_4\,.
\end{eqnarray*}
Since $\frac{\alpha-\beta}{2}<1$ we can choose $\delta >0$ such that $\sigma_{N-1} \left( \frac{\delta }{(1+\delta)^{\frac{\alpha-\beta}{2}}}  \right)^{N}-C_4\geq 0\,.$

So, \eqref{eq:intbv-1} is satisfied for $r=r_0$ and hence it is satisfied for any $r>r_0$. Thus, $f_x(r)>1$ for $r>r_0$, and, hence, $\frac{1}{m(x)}\leq r_0=\delta (1+|x|)^\eta$.
\end{proof}

The same lower bound holds in the case $\beta\geq \alpha$ as the following lemma shows.

\begin{lem}\label{pr:stim-m}
Let $\beta\geq \alpha$. There exists $C=C(\alpha,\beta,N)$ such that
\begin{equation}\label{eq:stima-m-every-beta}
m(x) \geq  C \left( 1+|x| \right)^{\frac{\beta-\alpha}{2}}.
\end{equation}
\end{lem}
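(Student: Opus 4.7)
The proof parallels Lemma 3.1. Set $\eta := (\beta-\alpha)/2 \geq 0$ and $r_0(x) := \delta(1+|x|)^{-\eta}$; the desired inequality $m(x) \geq C(1+|x|)^{\eta}$ is equivalent to $1/m(x) \leq r_0(x)$ with $C = \delta^{-1}$, so it suffices to exhibit $\delta>0$ (independent of $x$) such that $f_x(r_0)\geq 1$ for every $x$. As in Lemma 3.1, the reverse H\"older inequality together with monotonicity in $r$ of $\int_{B(x,r)}\tilde V^{N/2}\,dy$ reduces this to producing $\delta$ with
\begin{equation*}
\int_{B(x,r_0)}\bigl(\tilde V^*(y)\bigr)^{N/2}\,dy \geq C_2 + C_3, \qquad \tilde V^*(y) := \tfrac{1}{2}|y|^{\beta-\alpha},
\end{equation*}
where $C_2$ is the reverse-H\"older constant from Lemma 3.1 and $C_3 := \int_{B(0,1)}(\tilde V^*)^{N/2}\,dy$ is finite (no singularity at the origin since $(\beta-\alpha)N/2\geq 0$).

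The only new point is that $(\tilde V^*)^{N/2}$ is now \emph{non-decreasing} in $|y|$, so its infimum on $B(x,r_0)$ is attained at the point nearest the origin. I therefore split on $|x|$. For $|x|\geq 2$, a tangent-hyperplane argument (the tangent to $\partial B(0,|x|)$ at $x$ separates $B(0,|x|)$ from half of $B(x,r_0)$) shows $|B(x,r_0)\setminus B(0,|x|)|\geq \tfrac{1}{2}\omega_N r_0^N$, and on this set $|y|\geq|x|$, hence
\begin{equation*}
\int_{B(x,r_0)}(\tilde V^*)^{N/2}\,dy \;\geq\; c_0\, r_0^{N}\,|x|^{(\beta-\alpha)N/2}\;=\;c_0\,\delta^{N}\Bigl(\tfrac{|x|}{1+|x|}\Bigr)^{(\beta-\alpha)N/2}\;\geq\;c_0'\,\delta^{N},
\end{equation*}
which exceeds $C_2+C_3$ for $\delta$ large. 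For $|x|\leq 2$, $r_0\geq \delta 3^{-\eta}$, so choosing $\delta$ still larger forces $r_0\geq 2|x|$, hence $B(x,r_0)\supset B(0,r_0/2)$ and direct integration gives $\int_{B(x,r_0)}(\tilde V^*)^{N/2}\,dy \geq c_1 r_0^{N+(\beta-\alpha)N/2}\to\infty$ as $\delta\to\infty$. A single $\delta$ handles both regimes.

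The main technical subtlety, compared with Lemma 3.1, is precisely this reversal of where $\tilde V^*$ attains its minimum on the ball, which forbids a direct imitation of the previous proof at the level of the pointwise infimum. The two-case device above circumvents this: the tangent-hyperplane observation handles balls well away from the origin, while the inclusion $B(x,r_0)\supset B(0,r_0/2)$ handles balls that effectively surround it. In neither case is any new analytical ingredient beyond those already present in Lemma 3.1 required.
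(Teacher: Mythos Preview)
Your proof is correct and takes a genuinely different route from the paper's. The paper does not repeat the direct estimation of $f_x$ from Lemma~3.1; instead it invokes two ready-made facts from \cite{shen-1995}. First, the general comparison inequality $m(x)\ge C_1 m(0)\big/(1+|x|m(0))^{\eta_0}$ for some $\eta_0\in(0,1)$ (Shen's Lemma~1.4(c)); after checking that $m(0)$ is a finite positive constant, this yields a uniform positive lower bound $m(x)\ge C_3(1+|x|)^{-\eta_0}$, which is what handles the region near the origin. Second, the $B_\infty$ bound $m(x)\ge C\,\tilde V(x)^{1/2}\ge C'|x|^{\beta/2}(1+|x|)^{-\alpha/2}$ (Shen's Remark~2.9, available precisely because $\beta\ge\alpha$), which supplies the correct growth $(1+|x|)^{(\beta-\alpha)/2}$ at infinity. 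Splicing these two estimates gives the lemma with almost no computation.

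Your argument, by contrast, stays entirely within the framework of Lemma~3.1 and replaces the now-invalid ``infimum on the ball'' step by the tangent-hyperplane/inclusion device you describe. This is more self-contained---it uses only the $B_{N/2}$ reverse H\"older inequality and never invokes the $B_\infty$ property---so in principle it would extend to potentials $\tilde V$ that lie in $B_{N/2}$ but not in $B_\infty$. The paper's approach, on the other hand, is shorter and makes transparent exactly where the hypothesis $\beta\ge\alpha$ enters: it is what unlocks the pointwise estimate $m\gtrsim\tilde V^{1/2}$.
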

\begin{proof}
From  \cite[Lemma 1.4 (c)]{shen-1995},
there exist $C_1>0$ and $0<\eta_0<1$ such that
\[
 m(x)\geq \frac{C_1 m(y)}{\left(1+|x-y|m(y)\right)^{\eta_0}}\;.
\]
In particular,
\[
m(x)\geq \frac{C_1 m(0)}{\left(1+|x|m(0)\right)^{\eta_0}},
\]
where
$
\frac{1}{m(0)}=\sup_{r>0}\left\{r\,:\,f_0(r)\leq 1 \right\}
$ with
\[f_0(r)=\frac{1}{r^{N-2}} \int_{B(0,r)}\frac{|z|^\beta}{1+|z|^\alpha}dz
=\frac{\sigma_{N-1}}{r^{N-2}}\int_0^r\frac{\rho^{\beta+N-1}}{1+\rho^\alpha}d\rho \;.
\]
We have $\frac{\sigma_{N-1}}{(\beta+N)(1+r^\alpha)}r^{\beta+2}\leq f_0(r)\leq \frac{\sigma_{N-1}}{\beta+N}r^{\beta+2}$.
Since $\beta>0$
and $\beta-\alpha+2>0$ it follows that $\lim_{r\to 0}f_0(r)=0$ and $\lim_{r\to \infty}f_0(r)=\infty$.
Consequently,  $$0<\sup_{r>0}\left\{r\,:\,f_0(r)\leq 1 \right\}<\infty $$ and, hence,
$m(0)=C_2$ for some constant $C_2>0$. Then
\begin{equation}\label{eq:stima-m-m0}
m(x)\geq \frac{C_1C_2}{\left(1+C_2|x|\right)^{\eta_0}}\geq \frac{C_3}{\left(1+|x|\right)^{\eta_0}}
\end{equation}
for some constant $C_3>0$.

On the other hand, since $\beta \geq \alpha$, we obtain by \eqref{shen-B} that $\tilde V\in B_\infty$. Then, by \cite[Remark 2.9]{shen-1995}, we have
\begin{equation}\label{eq:stima-m-v}
m(x)\geq C_5 \tilde V^{1/2}(x) \ge C_6 |x|^{\frac{\beta}{2}}\left( 1+|x| \right) ^{-\frac{\alpha}{2}}.
\end{equation}
The thesis follows taking into account \eqref{eq:stima-m-m0} and \eqref{eq:stima-m-v}.
\end{proof}

Applying the estimate \eqref{eq:stima-G-shen} and the previous lemma
we obtain the following upper bounds for the Green function $G$.

\begin{lem}
Let $G(x,y)$ denotes the Green function of the Schr\"odinger operator $\Delta -\frac{|x|^\beta}{1+|x|^\alpha}$ and assume that
$\beta>\alpha-2$. Then,
\begin{equation}\label{eq:stima-G}
G(x,y)\leq C_k\, \frac{1}{ 1+|x-y|^k\,\left( 1+|y| \right)^{\frac{\beta-\alpha}{2} k }}\;\frac{1}{|x-y|^{N-2}}, \quad x,y\in\R^N
\end{equation}
for any $k> 0$ and some constant $C_k>0$ depending on $k$.
\end{lem}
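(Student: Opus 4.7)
The plan is to combine three ingredients: Shen's Gaussian--type bound \eqref{eq:stima-G-shen} on $G$, the symmetry $G(x,y)=G(y,x)$ (which holds because $\tilde A=\Delta-\tilde V$ is symmetric, so its heat kernel $\tilde p(t,\cdot,\cdot)$ and hence $G$ are symmetric), and the lower bound on the auxiliary function $m$ given by Lemmas \ref{pr:mx-beta-le-alpha} and \ref{pr:stim-m}. Note that these two lemmas together cover precisely the range $\beta>\alpha-2$, giving $m(z)\ge C(1+|z|)^{\frac{\beta-\alpha}{2}}$ for every $z\in\R^N$.

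First, I would rewrite Shen's estimate using $y$ in the role of the base point: by symmetry of $G$,
\[
G(x,y)=G(y,x)\le \frac{C_k}{\bigl(1+m(y)|x-y|\bigr)^k}\cdot\frac{1}{|x-y|^{N-2}}.
\]
This is the key swap: the factor $(1+|\cdot|)^{(\beta-\alpha)/2}$ appearing in the target bound is attached to $y$, so I need the auxiliary function evaluated at $y$, not at $x$.

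Next, I would use the elementary inequality $(1+a)^k\ge \tfrac12(1+a^k)$ valid for all $a\ge 0$ and $k>0$ (which follows from $(1+a)^k\ge\max(1,a^k)\ge\tfrac12(1+a^k)$). Applying it with $a=m(y)|x-y|$ gives
\[
\bigl(1+m(y)|x-y|\bigr)^k\ \ge\ \tfrac12\bigl(1+m(y)^k|x-y|^k\bigr).
\]
Inserting the lower bound $m(y)^k\ge C^k(1+|y|)^{\frac{\beta-\alpha}{2}k}$ and collecting constants (bounding $\min(1,C^k)$ below by a positive constant $c_k$) yields
\[
\bigl(1+m(y)|x-y|\bigr)^k\ \ge\ c_k\Bigl(1+|x-y|^k(1+|y|)^{\frac{\beta-\alpha}{2}k}\Bigr).
\]

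Combining this with the symmetric form of Shen's estimate produces exactly \eqref{eq:stima-G}, with a new constant $C_k/c_k$ absorbed into $C_k$. I do not expect any real obstacle: the only subtle point is remembering to use symmetry of $G$ so that the estimate is anchored at $y$, and being careful that the elementary inequality $(1+a)^k\ge \tfrac12(1+a^k)$ holds for all $k>0$ (in particular for $0<k<1$, where the naive $(1+a)^k\ge 1+a^k$ fails).
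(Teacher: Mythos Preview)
Your argument is correct and coincides with the paper's approach: the paper does not write out a proof, stating only that the estimate follows by applying \eqref{eq:stima-G-shen} together with the preceding lower bounds on $m$, and your proposal supplies exactly those details (including the symmetry step $G(x,y)=G(y,x)$ to anchor the bound at $y$, which the paper invokes explicitly later in the proof of Lemma~\ref{chius2}).
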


Using the above lemma we have the following estimate.
\begin{lem} \label{chius2} Assume that $\alpha > 2$, $N>2$  and $\beta>\alpha-2$. Then
there exists a positive constant $C$ such that for every $0\leq \gamma\leq \beta$ and
$f\in L^p(\R^N)$
\begin{equation}\label{eq:stima-L}
\||x|^\gamma Lf\|_{p}\leq  C\|f\|_{p},
\end{equation}
where $L$ is defined in \eqref{eq:operatore-L}.
\end{lem}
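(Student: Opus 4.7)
The goal is to show that the kernel $K(x,y)=|x|^\gamma G(x,y)/(1+|y|^\alpha)$ defines an operator bounded on $L^p(\R^N)$ for every $1<p<\infty$. The plan is to apply Schur's test in its $L^1$--$L^\infty$ form: if both
\[
\sup_{x\in\R^N}\int_{\R^N}K(x,y)\,dy<\infty\qquad\text{and}\qquad \sup_{y\in\R^N}\int_{\R^N}K(x,y)\,dx<\infty,
\]
then the operator is bounded on $L^1$ and on $L^\infty$, hence on every $L^p$ by interpolation.

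First I would use the triangle inequality $|x|^\gamma\le C(|x-y|^\gamma+|y|^\gamma)$ together with the elementary estimates $(1+|y|^\alpha)^{-1}\le C(1+|y|)^{-\alpha}$ and $|y|^\gamma(1+|y|^\alpha)^{-1}\le C(1+|y|)^{\gamma-\alpha}$, and insert the Green function bound \eqref{eq:stima-G} with a parameter $k$ to be chosen large, so that $K$ is dominated pointwise by $K_1+K_2$ where
\begin{align*}
K_1(x,y)&\le C_k\,\frac{|x-y|^{\gamma-N+2}}{(1+|y|)^\alpha\bigl(1+|x-y|^k(1+|y|)^{(\beta-\alpha)k/2}\bigr)},\\
K_2(x,y)&\le C_k\,\frac{(1+|y|)^{\gamma-\alpha}}{|x-y|^{N-2}\bigl(1+|x-y|^k(1+|y|)^{(\beta-\alpha)k/2}\bigr)}.
\end{align*}

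To estimate $\sup_y\int K_i(x,y)\,dx$, I would note that the $x$-dependence is purely through $|x-y|$, pass to polar coordinates around $y$, and substitute $s=r(1+|y|)^{(\beta-\alpha)/2}$. Provided $k>\gamma+2$, the resulting dimensionless $s$-integrals converge and a direct computation gives
\[
\int_{\R^N} K_2(x,y)\,dx\le C(1+|y|)^{\gamma-\beta},\qquad \int_{\R^N} K_1(x,y)\,dx\le C(1+|y|)^{-\alpha-(2+\gamma)(\beta-\alpha)/2}.
\]
The first is uniformly bounded in $y$ precisely because $\gamma\le\beta$; the second is also bounded since the condition $\gamma\le\beta$ combined with $\beta>\alpha-2$ yields $\beta(\alpha-\beta-2)\le 0$, which is equivalent to the exponent being non-positive.

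The main obstacle will be the supremum $\sup_x\int K_i(x,y)\,dy$, because $(1+|y|)$ depends on $y$ but not only on $|x-y|$, so the radial trick used above no longer applies. My plan is to partition $\R^N$ into the regions $\{|y|\le|x|/2\}$, $\{|y|\ge 2|x|\}$ and $\{|x|/2<|y|<2|x|\}$, where respectively $|x-y|\sim|x|$, $|x-y|\sim|y|$ and $|y|\sim|x|$. In each region, the local singularity $|x-y|^{-(N-2)}$ (or $|x-y|^{\gamma-(N-2)}$) is integrable because $N>2$, while the tail is controlled by the factor $(1+|x-y|^k(1+|y|)^{(\beta-\alpha)k/2})^{-1}$: since $\beta-\alpha+2>0$, the quantity $|x-y|(1+|y|)^{(\beta-\alpha)/2}$ blows up as either $|x|$ or $|y|$ tends to infinity, so taking $k$ sufficiently large (depending on $\gamma,\alpha,\beta,N$) yields a bound uniform in $x$. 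Combining the two suprema then finishes the proof.
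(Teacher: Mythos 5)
Your proposal is correct, but it takes a genuinely different route from the paper on the critical near-diagonal region. The paper splits $\R^N$ at $|x-y|=1+|y|$: on $E_2=\{|x-y|>1+|y|\}$ it runs what is in substance the same Schur/H\"older test you propose (bounding $\int_{E_2}|x|^\gamma\Gamma(x,y)\,dy$ and $\int_{E_2}|x|^\gamma\Gamma(x,y)\,dx$ uniformly, using $1+|x|\le 2|x-y|$ there), but on $E_1=\{|x-y|\le 1+|y|\}$ it does \emph{not} integrate the kernel at all: it observes that $|x|^\gamma\Gamma(x,y)\le C(1+|x|)^{\beta-\alpha}G(x,y)\le Cm^2(x)G(x,y)$ and invokes Shen's weighted resolvent bound $\|m^2(-\tilde A_p)^{-1}f\|_p\le C\|f\|_p$ (\cite[Corollary 2.8]{shen-1995}), which handles the diagonal singularity in one stroke. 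You instead stay entirely at the level of the pointwise Green function estimate \eqref{eq:stima-G}, split $|x|^\gamma\le C(|x-y|^\gamma+|y|^\gamma)$, and carry out the Schur test globally, with the three regions $|y|\le|x|/2$, $|y|\ge 2|x|$, $|x|/2<|y|<2|x|$ replacing the appeal to Shen's $L^p$ theorem. I checked the two exponents you compute for $\sup_y\int K_i\,dx$: the condition $-\alpha-(2+\gamma)(\beta-\alpha)/2\le 0$ reduces in the worst case $\gamma=\beta<\alpha$ to $(2-(\alpha-\beta))(\alpha-(\alpha-\beta))\ge 0$, i.e.\ $\beta(\beta-\alpha+2)\ge 0$, which holds precisely because $\beta>\alpha-2>0$; the other exponent $\gamma-\beta\le 0$ is immediate. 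The $\sup_x\int K_i\,dy$ direction, which you only sketch, does go through in each of your three regions for $k$ large (the factor $|x-y|^k(1+|y|)^{(\beta-\alpha)k/2}\gtrsim(1+|x|)^{(\beta-\alpha+2)k/2}$ or $(1+|y|)^{(\beta-\alpha+2)k/2}$ controls the tails because $\beta-\alpha+2>0$, and the local singularities $|x-y|^{-(N-2)}$, $|x-y|^{\gamma-N+2}$ are integrable). What each approach buys: the paper's is shorter because the hardest part (the diagonal) is outsourced to a nontrivial theorem of Shen, whereas yours is self-contained modulo the pointwise bound \eqref{eq:stima-G} and yields explicit decay rates of the row and column integrals, at the cost of more case-by-case bookkeeping that you would need to write out in full.
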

\begin{proof}
Let $\Gamma(x,y)=\frac{G(x,y)}{1+|y|^\alpha}$, $f\in L^p(\R^N)$ and
\[
u(x)=\int_{\R^N}\Gamma (x,y)f(y)dy.
\]
We have to show that
\[
\||x|^\gamma u\|_p\leq C\|f\|_p.
\]

Let us consider the regions $E_1:=\{|x-y|\leq (1+|y|) \}$
and $E_2:=\{|x-y|> (1+|y|) \}$ and write
\[
u(x)=\int_{E_1} \Gamma (x,y)f(y)dy+\int_{E_2} \Gamma (x,y)f(y)dy=:u_1(x)+u_2(x)\;.
\]

In $E_1$ we have
\[
 \frac{1+|x|}{1+|y|}\leq \frac{1+|x-y|+|y|}{1+|y|}\leq 2.
\]
So, by Lemma \ref{pr:stim-m}
\begin{align*}
& \left| |x|^\gamma u_1(x)\right|\leq |x|^\gamma \int_{E_1}\Gamma(x,y)|f(y)|dy \leq
\frac{1+|x|^\beta}{1+|x|^\alpha} \int_{E_1}\frac{1+|x|^\alpha}{1+|y|^\alpha}G(x,y)|f(y)|dy\\
&\quad \leq C (1+|x|)^{\beta-\alpha} \int_{\R^N}G(x,y)|f(y)|dy\leq C m^2(x) \tilde u(x),
\end{align*}
where $\tilde u (x)=\int_{\R^N}G(x,y)|f(y)|dy$.   By \eqref{shen-B} we have $\tilde V\in B_{\frac{N}{2}}$. So applying \cite[Corollary 2.8]{shen-1995}, we obtain $\|m^2\tilde u\|_p\leq C\|f\|_p$ and then
$\| |x|^\gamma u_1\|_p\leq C \|f\|_p$.

In the region $E_2$, we have, by H\"older's inequality,
\begin{align}\label{eq:u-2-holder}
& \left| |x|^\gamma u_2(x) \right|
  \leq |x|^\gamma \int_{E_2}\Gamma(x,y)|f(y)|dy=\int_{E_2}  \left( |x|^\gamma \Gamma(x,y)\right) ^{\frac{1}{p'}}
    \left( |x|^\gamma \Gamma(x,y)\right)^{\frac{1}{p}}|f(y)|dy\nonumber \\
&\quad\leq
  \left(\int_{E_2}|x|^\gamma \Gamma(x,y)dy \right)^{\frac{1}{p'}}
  \left(\int_{E_2}|x|^\gamma \Gamma(x,y)|f(y)|^p dy\right)^{\frac{1}{p}}\;.
\end{align}
We propose to estimate first $\int_{E_2}|x|^\gamma\Gamma(x,y)dy$. In $E_2$ we have $1+|x|\leq  1+|y|+|x-y|\leq 2|x-y|$,
then from \eqref{eq:stima-G} it follows that
\begin{eqnarray*}
|x|^\gamma \Gamma (x,y) &\leq &  |x|^\gamma G(x,y)\\
&\leq & C \frac{1+|x|^\beta}{|x-y|^k\left( 1+|y| \right)^{k\frac{\beta-\alpha}{2}}}\frac{1}{|x-y|^{N-2}}\\
&\leq & C \frac{1}{|x-y|^{k-\beta+N-2}}
    \frac{1}{\left( 1+|y| \right)^{k\frac{\beta-\alpha}{2}}}\;.
\end{eqnarray*}
For every $k>\beta-N+2$, taking into account that $\frac{1}{|x-y|}< \frac{1}{1+|y|}$, we get
\[
|x|^\gamma \Gamma (x,y) \leq
    \frac{1}{\left( 1+|y| \right)^{k\frac{\beta-\alpha+2}{2} +N-2-\beta}}\;.
\]
Since $\beta-\alpha+2>0$ we can choose $k$ such that $\frac{k}{2}(\beta-\alpha +2)+N-2-\beta>N$,
then
\begin{align*}
\int_{E_2}|x|^\gamma \Gamma(x,y)dy\leq
\int_{E_2}|x|^\gamma G(x,y)dy\leq C\int_{\R^N} \frac{1}{(1+|y|)^{\frac{k}{2}(2+\beta-\alpha)+N-2-\beta}}dy<C\,.
\end{align*}
Moreover by the symmetry of $G$  we have
\begin{eqnarray*}
|x|^\gamma \Gamma (x,y) &\leq & |x|^\gamma G(x,y)\\
  &\leq & C\frac{1+|x|^\beta}{|x-y|^k\left( 1+|x| \right)^{k\frac{\beta-\alpha}{2}}}\frac{1}{|x-y|^{N-2}}\\
  &\leq & C \frac{1}{|x-y|^{k-\beta +N-2}}
    \frac{1}{\left( 1+|x| \right)^{k\frac{\beta-\alpha}{2}}}\;.
\end{eqnarray*}
Taking into account that $\frac{1}{|x-y|}\leq 2\frac{1}{1+|x|}$, arguing as above we obtain
\begin{align}\label{eq:vudx}
\int_{E_2}|x|^\gamma \Gamma(x,y)dx
\leq C.
\end{align}
Hence \eqref{eq:u-2-holder} implies
\begin{equation}\label{eq:V-2-u-2}
\left| |x|^\gamma u_2(x) \right|^p \leq C\int_{E_2}|x|^\gamma \Gamma(x,y)|f(y)|^pdy.
\end{equation}
Thus, by \eqref{eq:V-2-u-2} and \eqref{eq:vudx}, we have
\begin{align*}
&\||x|^\gamma u_2\|^p_p\leq C \int _{E_2} \int _{E_2}|x|^\gamma\Gamma(x,y)|f(y)|^p dydx\\
&\quad = C\int _{E_2} |f(y)|^p \left(\int _{E_2}|x|^\gamma \Gamma(x,y)dx\right)dy\leq C\|f\|_p^p\;.
\end{align*}
\end{proof}


We are now ready to show the invertibility of $A_p$ and $D(A_p)\subset D(V)$.

\begin{prop}
 \label{invertibile}
Assume that $N>2,\,\alpha > 2$ and $\beta>\alpha-2$. Then the operator $A_p$ is
closed and invertible.
Moreover there exists $C>0$ such that, for every $0\leq \gamma\leq \beta $, we have
\begin{equation}\label{eq:stima-apriori}
\|\,|\cdot |^{\gamma }u\|_{p}\leq  C\|A_pu\|_{p},\quad \forall u\in D_{p,max}(A)\;.
\end{equation}
\end{prop}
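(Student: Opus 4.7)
The plan is to read off everything from the integral representation already set up in Section 3. The operator $A_p u = f$ is equivalent, after dividing by $1+|x|^\alpha$, to $\tilde A u = \tilde f$ with $\tilde f = f/(1+|\cdot|^\alpha) \in L^p(\R^N)$, and the preamble to Lemma \ref{chius2} records that $0 \in \rho(\tilde A_p)$ with resolvent given by the Green kernel. So the whole statement should fall out by combining this with the $L^p$-boundedness of $L$ from Lemma \ref{chius2}.

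First I would handle surjectivity. Given $f \in L^p(\R^N)$, I define $u := -Lf$. By Lemma \ref{chius2} with $\gamma=0$ we have $u \in L^p(\R^N)$, and by construction $u = (-\tilde A_p)^{-1}(-\tilde f)$, hence $\tilde A u = \tilde f$ in the distributional sense. Local elliptic regularity applied to $\Delta u = \tilde V u + \tilde f$ (on every ball, where $\tilde V$ is bounded) yields $u \in W^{2,p}_{\rm loc}(\R^N)$, and multiplying $\tilde A u = \tilde f$ through by $1+|x|^\alpha$ gives $Au = f \in L^p(\R^N)$. Thus $u \in D_{p,max}(A)$ and $A_p u = f$.

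Next I would establish the a priori estimate, from which injectivity is automatic. Given $u \in D_{p,max}(A)$, set $f := A_p u$; since $(1+|x|^\alpha)^{-1}$ is bounded, $\tilde A u = f/(1+|\cdot|^\alpha)$ belongs to $L^p(\R^N)$, so $u$ lies in the $L^p$-maximal domain of $\tilde A$. Applying $(-\tilde A_p)^{-1}$ gives $u = -Lf$, whence Lemma \ref{chius2} yields
\[
\||\cdot|^\gamma u\|_p = \||\cdot|^\gamma Lf\|_p \leq C\|f\|_p = C\|A_p u\|_p
\]
for every $0 \leq \gamma \leq \beta$. The case $\gamma=0$ shows that $A_p u = 0$ forces $u=0$, so $A_p$ is injective; combined with the previous step it is a bijection $D_{p,max}(A) \to L^p(\R^N)$. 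Closedness of $A_p$ then follows because its inverse $-L$ is bounded on $L^p(\R^N)$, and a densely defined operator with bounded everywhere-defined inverse is closed.

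The one genuinely non-trivial point is the identification $u = -Lf$ carried out in the second step: it requires that any $u \in D_{p,max}(A)$ is already in the domain on which $(-\tilde A_p)^{-1}$ has been defined, i.e., that the maximal distributional realization of $\Delta - \tilde V$ in $L^p(\R^N)$ and the realization obtained through the Green kernel coincide. Since $\tilde V \in B_{N/2}$ by \eqref{shen-B}, this is precisely the content of the standard Schrödinger theory (\emph{cf.} \cite{shen-1995}) that was invoked at the beginning of the section; everything else is a routine assembly of already-proved facts.
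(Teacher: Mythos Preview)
Your argument is correct and follows essentially the same route as the paper: surjectivity via the operator $L$, the a priori estimate via Lemma~\ref{chius2}, and closedness from boundedness of the inverse. The one organizational difference is how injectivity is obtained. The paper proves it \emph{directly}: given $u\in D_{p,max}(A)$ with $A_pu=0$, it observes that $u$ lies in $D_{p,max}(\tilde A)$, invokes Okazawa's result \cite{okazawa} to identify this with $D(\Delta)\cap D(\tilde V)$, and then multiplies $\tilde A u=0$ by $u|u|^{p-2}$ and integrates by parts (justified via \cite{met-spi}) to force $u\equiv 0$. Only after that does it conclude $u=-L(A_pu)$ by uniqueness and read off \eqref{eq:stima-apriori} from Lemma~\ref{chius2}. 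You instead short-circuit this: you use $0\in\rho(\tilde A_p)$ from the preamble to get $u=-Lf$ immediately, deduce the estimate, and recover injectivity as the special case $\gamma=0$.

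Your route is slightly slicker in that it avoids the explicit energy computation, but note that the ``non-trivial point'' you flag in your last paragraph is exactly the same hurdle the paper faces, and the correct reference for it is \cite{okazawa} (coincidence of the maximal realization of $\Delta-\tilde V$ with the natural domain $D(\Delta)\cap D(\tilde V)$), not \cite{shen-1995}, which supplies the Green-function bounds but not the domain identification. With that citation adjusted, your proof is complete and interchangeable with the paper's.
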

\begin{proof}
Let us first prove the injectivity of $A_p$. Let $u\in D_{p,max}(A)$ such that $A_pu=0$, in particular
$\tilde A_pu=0$. It follows that $u\in D_{p,max}(\tilde A)=D(\Delta)\cap D\left(\frac{|x|^\beta}{1+|x|^\alpha}\right)$, see \cite{okazawa} (see \cite[Theorem 2.5]{lor-rhan}).
Then multiplying $A_pu$ with $u|u|^{p-2}$ and integrating over $\R^N$ we obtain, by \cite{met-spi},
\begin{eqnarray*}
0&=& \int_{\R^N} u|u|^{p-2} \Delta u \,dx-\int_{\R^N}\frac{|x|^\beta}{1+|x|^\alpha}|u|^pdx \\
&=& -(p-1)\int_{\R^N}|u|^{p-2}|\nabla u|^2dx-\int_{\R^N}\frac{|x|^\beta}{1+|x|^\alpha}|u|^pdx,
\end{eqnarray*}
from which we have $u\equiv 0$. So, by \eqref{eq:operatore-L} we obtain the invertibility of $A_p$.\\
By elliptic regularity one deduces that $A_p$ is closed on $D_{p, max}(A)$.
Finally, the estimate \eqref{eq:stima-apriori} follows from \eqref{eq:stima-L}.
\end{proof}

The previous Theorem gives in particular the $A_p$-boundedness of the potential $V$ and the following regularity result.
\begin{cor}\label{w2p}
Assume that $N>2,\,\alpha > 2$ and $\beta>\alpha-2$. Then
\begin{itemize}
\item[(i)] there exists $C>0$ such that for every $u\in D_{p,max}(A)$
\[
\|Vu\|_p\leq C \|A_pu\|_p ;
\]
\item[(ii)] \[
D_{p,max}(A)=\left\{ u\in W^{2,p}(\R^N)\;|\;Au\in L^p(\R^N)\right\}.
\]
\end{itemize}
\end{cor}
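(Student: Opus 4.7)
The plan is to derive both parts directly from the a priori estimate \eqref{eq:stima-apriori} established in Proposition \ref{invertibile}, together with the standard $L^p$-regularity of the Laplacian on $\R^N$.

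For part (i), observe that $V(x)=|x|^\beta$, so the choice $\gamma=\beta$ is admissible in \eqref{eq:stima-apriori}. Applying the estimate to an arbitrary $u\in D_{p,max}(A)$ yields
\begin{equation*}
\|Vu\|_p = \||\cdot|^\beta u\|_p \le C\|A_pu\|_p,
\end{equation*}
which is the desired bound.

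For part (ii), the inclusion $\{u\in W^{2,p}(\R^N): Au\in L^p(\R^N)\}\subset D_{p,max}(A)$ is immediate from the definitions. For the reverse, fix $u\in D_{p,max}(A)$ and write
\begin{equation*}
(1+|x|^\alpha)\Delta u = A_pu + Vu.
\end{equation*}
Since $1+|x|^\alpha\ge 1$, the trivial bound $\|\Delta u\|_p\le \|(1+|x|^\alpha)\Delta u\|_p$ combined with part (i) gives
\begin{equation*}
\|\Delta u\|_p \le \|A_pu\|_p + \|Vu\|_p \le (1+C)\|A_pu\|_p.
\end{equation*}
In particular $\Delta u\in L^p(\R^N)$.

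The final step is to invoke the well-known fact that, for $1<p<\infty$, the maximal domain of the Laplacian in $L^p(\R^N)$ coincides with $W^{2,p}(\R^N)$: any $u\in L^p(\R^N)$ with $\Delta u\in L^p(\R^N)$ belongs to $W^{2,p}(\R^N)$, with $\|u\|_{W^{2,p}}\le C(\|u\|_p+\|\Delta u\|_p)$, by Calder\'on--Zygmund theory. This yields $u\in W^{2,p}(\R^N)$ and completes the proof. There is no real obstacle here: once the bound $\|Vu\|_p\le C\|A_pu\|_p$ is in hand, part (ii) reduces to separating the two terms of $A$ and applying scalar elliptic regularity.
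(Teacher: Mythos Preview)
Your proof is correct and follows essentially the same route as the paper: part (i) is the case $\gamma=\beta$ of estimate \eqref{eq:stima-apriori}, and for part (ii) the paper likewise observes that $\Delta u=\dfrac{Au+Vu}{1+|x|^\alpha}\in L^p(\R^N)$ (equivalently, your bound $\|\Delta u\|_p\le\|(1+|x|^\alpha)\Delta u\|_p$ using $1+|x|^\alpha\ge 1$) and then invokes the Calder\'on--Zygmund inequality.
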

%
%
\begin{proof} We have only to prove the inclusion $D_{p,max}(A)\subset \left\{ u\in W^{2,p}(\R^N)\;|\;Au\in L^p(\R^N)\right\}$.
Let $u\in D_{p,max}(A)$. Then, by (i), $Vu\in L^p(\R^N)$ and hence
\[
 \Delta u=\frac{Au+Vu}{1+|x|^\alpha}\in L^p(\R^N).
\]
So, the thesis follows from the Calderon-Zygmund inequality.
\end{proof}

We can now state the main result of this section
\begin{teo} \label{risolvente}
Assume that $N>2$, $\beta>\alpha - 2$ and $\alpha >2$.  Then, $[0,+\infty)\subset \rho(A_p)$ and $(\lambda -A_p)^{-1}$ is a positive operator on $L^p(\R^N)$ for any $\lambda\ge 0$. Moreover, if $f \in L^p(\R^N) \cap
C_0(\R^N)$, then $(\lambda-A_p)^{-1}f=(\lambda-A)^{-1}f$.
\end{teo}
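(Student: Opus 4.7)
The plan is to reduce the claim $\lambda \in \rho(A_p)$ for arbitrary $\lambda \ge 0$ to Proposition~\ref{invertibile} applied to a shifted operator, and then to identify the $L^p$- and $C_0$-resolvents on their common domain via the uniqueness provided by the Lyapunov function.

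For $\lambda \ge 0$, I would observe that $\lambda - A_p = -A^{(\lambda)}_p$, where $A^{(\lambda)} := (1+|x|^\alpha)\Delta - (V+\lambda)$ has exactly the form \eqref{eq:operatore-A} with $V$ replaced by $V+\lambda = |x|^\beta + \lambda$. Since the shift by $\lambda$ is bounded and nonnegative, the modified tilde potential $\tilde V^{(\lambda)} = \tilde V + \lambda/(1+|x|^\alpha)$ lies in the same reverse H\"older classes as $\tilde V$ (cf.~\eqref{shen-B}) and has identical asymptotic behaviour at infinity. The Shen bound \eqref{eq:stima-G-shen} and the lower estimates on the auxiliary function from Lemmas~\ref{pr:mx-beta-le-alpha}--\ref{pr:stim-m} therefore go through verbatim for $G^{(\lambda)}$, so that Lemma~\ref{chius2} and Proposition~\ref{invertibile} apply to $A^{(\lambda)}_p$ without modification. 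This yields $\lambda \in \rho(A_p)$ together with the explicit integral representation
\begin{equation*}
    (\lambda - A_p)^{-1} f(x) = \int_{\R^N} G^{(\lambda)}(x,y)\,\frac{f(y)}{1+|y|^\alpha}\,dy,
\end{equation*}
whence positivity of $(\lambda - A_p)^{-1}$ follows at once from the positivity of $G^{(\lambda)}$.

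For the consistency statement, fix $f \in L^p(\R^N) \cap C_0(\R^N)$ and set $u_p := (\lambda - A_p)^{-1}f$, $u_\infty := (\lambda - A)^{-1}f$; both solve $\lambda u - Au = f$ almost everywhere, $u_\infty$ lies in $D_{max}(A) \cap C_0(\R^N)$ by Proposition~\ref{pr:semigroup-C0}, and $u_p \in W^{2,p}(\R^N)$ by Corollary~\ref{w2p}. I would use the integral representation above together with the kernel bound \eqref{eq:stima-G} adapted to $G^{(\lambda)}$ to show $u_p \in C_0(\R^N)$: splitting $\R^N$ into $E_1 = \{|x-y| \le 1+|y|\}$ and $E_2 = \{|x-y| > 1+|y|\}$ as in the proof of Lemma~\ref{chius2} (now with $\gamma = 0$) and using $\|f\|_\infty < \infty$, one obtains a uniform pointwise estimate of $|u_p(x)|$ that tends to $0$ as $|x|\to\infty$ by dominated convergence, exploiting $f(y) \to 0$ at infinity. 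Once $u_p \in D_{max}(A)$, the uniqueness of bounded solutions of $\lambda u - Au = f$ guaranteed by the Lyapunov function $\varphi = 1 + |x|^\gamma$, $\gamma > 2$, forces $u_p = u_\infty$.

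The more straightforward part is the invertibility and positivity, which merely reuses the machinery of Section~3. The genuine difficulty lies in establishing $u_p \in C_0(\R^N)$, since for small $p$ the space $W^{2,p}(\R^N)$ does not embed into $C_0$, and a uniform pointwise decay estimate on the integral kernel must be extracted from the rapid decay of $G^{(\lambda)}$ separately in the two regions $E_1$ and $E_2$. A safer alternative, should this direct approach be awkward, is to approximate $f$ in both $L^p$ and sup-norm by functions in $C_c^\infty(\R^N)$, verify $u_p = u_\infty$ for each approximant (where consistency follows from approximation by classical Dirichlet resolvents on balls $B(0,n)$, which coincide pointwise), and then pass to the limit.
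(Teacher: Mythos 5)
Your proposal is essentially correct, but both halves take routes genuinely different from the paper's. For $[0,\infty)\subset\rho(A_p)$ the paper does \emph{not} rerun the Green-function machinery for the shifted potential: it first shows that $(\lambda-A_p)^{-1}$ is positive whenever $\lambda\ge 0$ lies in $\rho(A_p)$ (by writing $u=(\lambda q-\tilde A_p)^{-1}(qf)$ with $q=(1+|x|^\alpha)^{-1}$ and using positivity of the semigroup generated by $\tilde A_p-\lambda q$), deduces from the resolvent identity the domination $(\lambda-A_p)^{-1}\le(-A_p)^{-1}=L$ and hence the uniform bound $\|(\lambda-A_p)^{-1}\|\le\|L\|$, and then runs an open--closed connectedness argument on $E=[0,\infty)\cap\rho(A_p)$, anchored at $0\in\rho(A_p)$ from Proposition~\ref{invertibile}. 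This buys you the whole half-line from the single case $\lambda=0$ with no new kernel estimates. Your route instead requires that Shen's theory apply to $\tilde V^{(\lambda)}=\tilde V+\lambda q$, and the phrase ``go through verbatim'' overclaims at exactly one point: membership of $\tilde V^{(\lambda)}$ in $B_{N/2}$ is not automatic, since $q=(1+|x|^\alpha)^{-1}$ by itself fails the reverse H\"older inequality on large balls for $\alpha>2$, so you must check the sum directly (it does work, because the $|x|^\beta$ part dominates both averages on large balls, and the lower bound $m^{(\lambda)}\ge m$ is free by monotonicity of \eqref{eq:def-m}); this is a fillable but genuine verification, not a citation. For the consistency statement the paper also reduces to $f\in C_c^\infty(\R^N)$ by density, but then proves $u\in C_b(\R^N)\cap D_{max}(A)$ by a Sobolev bootstrap through the scale $D_{q_n,max}(A)\subset W^{2,q_n}(\R^N)\subset L^{q_{n+1}}(\R^N)$ rather than by pointwise kernel bounds; your direct estimate of $u_p$ on $E_1$ and $E_2$ is a workable substitute (and your ``safer alternative'' is in effect the paper's argument), and both end, as you do, by invoking uniqueness in $D_{max}(A)$ furnished by the Lyapunov function.
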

\begin{proof}
Let us first prove that if $0\le \lambda \in \rho(A_p)$, then $(\lambda -A_p)^{-1}$ is a positive operator on $L^p(\R^N)$. To this purpose, take $0\le f\in L^p(\R^N)$ and set $u=(\lambda -A_p)^{-1}f$. Then, by Corollary \ref{w2p}, $u\in D(\tilde A_p)$ and
$$-(\tilde A_p-\lambda q)u=qf=:\tilde f,$$ where $q(x)=\frac{1}{1+|x|^\alpha}$. Since $\tilde A_p$ generates an exponentially stable and positive $C_0$-semigroup $(\tilde T_p(t))_{t\ge 0}$ on $L^p(\R^N)$ (see \cite[Theorem 2.5]{lor-rhan}), it follows that the semigroup $(e^{-t\lambda q}\tilde T_p(t))_{t\ge 0}$ generated by $\tilde A_p-\lambda q$ is positive and exponentially stable. Hence, $$u=(\lambda q-\tilde A_p)^{-1}\tilde f\ge 0.$$
We show that $E=[0,+\infty)\cap \rho(A_p)$ is an non empty open and closed set in $[0,+\infty)$.
\\
By Proposition \ref{invertibile} we have $0 \in \rho(A_p)$ and hence $E\neq \emptyset$. On the other hand, using the above positivity property and the
resolvent equation we have $(\lambda-A_p)^{-1} \le (-A_p)^{-1}=L$ for any $\lambda \in E$ and therefore
\begin{equation} \label{stimapositiva}
\|(\lambda-A_p)^{-1}\| \le \|L\|\;,
\end{equation}
it follows that the operator norm of $(\lambda-A_p)^{-1}$ is bounded in $E$ and consequently $E$ is closed.
Finally, since $\rho(A_p)$ is an open set, it follows that $E$ is open in $[0,+\infty)$. Thus, $E=[0,+\infty)$.

Now in order to show the last statement we may assume $f \in C_c^\infty$, the thesis will follows
by density.
Setting $u:=(\lambda-A_p)^{-1}f$, we obtain, by local elliptic regularity (cf. \cite[Theorem 9.19]{gil-tru}),
that $u \in C^{2+\sigma}_{loc}(\R^N)$ for some
$0<\sigma <1$. On the other hand, $u\in W^{2,p}(\R^N)$, by Corollary \ref{w2p}. If $p\ge \frac{N}{2}$, then by Sobolev's inequality, $u\in L^q(\R^N)$ for all $q\in [p,+\infty)$. In particular, $u\in L^q(\R^N)$ for some $q> \frac{N}{2}$ and hence $Au=-f+\lambda u\in L^q(\R^N)$. Moreover, since $u \in C^{2+\sigma}_{loc}(\R^N)$, it follows that $u\in W^{2,q}_{loc}(\R^N)$. So, $u\in D_{q,max}(A)\subset W^{2,q}(\R^N)\subset C_b(\R^N)$, by Corollary \ref{w2p} and Sobolev's embedding theorem, since $q>\frac{N}{2}$.

Let us now suppose that $p<\frac{N}{2}$. Take the sequence $(r_n)$, defined by $r_n=1/p-2n/N$ for any $n\in\N$, and set $q_n=1/r_n$ for any $n\in\N$. Let $n_0$ be the smallest integer such that $r_{n_0}\le 2/N$ noting that $r_{n_0}>0$.
Then, $u\in D_{p,max}(A)\subset L^{q_1}(\R^N)\cap L^p(\R^N)$,
by the Sobolev embedding theorem. As above we obtain that $u\in D_{q_1,max}(A)\subset L^{q_2}(\R^N)$. Iterating this argument, we deduce that $u\in D_{q_{n_0},max}(A)$. So we can conclude that $u\in C_b(\R^N)$ arguing as in the previous case. Thus,
$Au=-f+\lambda u\in C_b(\R^N)$.
Again, since $u \in C^{2+\sigma}_{loc}(\R^N)$, it follows that $u\in W^{2,q}_{loc}(\R^N)$ for any $q\in (1,+\infty)$. Hence, $u\in D_{max}(A)$. So, by the uniqueness of the solution of the elliptic problem, we have $(\lambda -A_p)^{-1}f=(\lambda -A)^{-1}f$
for any $f\in C_c^\infty(\R^N)$.
%
\end{proof}


\section{Generation of semigroups}

In this section we show that $A_p$ generates an analytic semigroup on $L^p(\R^N),\,1<p<\infty$, provided that $N>2,\,\alpha > 2$ and $\beta>\alpha-2$.

We start by proving a weighted gradient estimate. To this purpose we
need the following covering result from, see \cite[Proposition 6.1]{cup-for}, to prove  a weighted gradient estimate.

\begin{prop} \label{pr:covering} For every $0 \le k <1/2$ there exists a natural number  $\zeta=\zeta (N,k)$ with the following property:
Given $\mathcal{F}= \{B(x,\rho(x))\}_{x\in \R^N}$, where $\rho:\R^N\to \R_+$ is a Lipschitz continuous function with Lipschitz constant $k$.
Then there exists a countable subcovering $\{B(x_n,\rho(x_n))\}_{n\in\N}$ of $\R^N$ such that at most $\zeta$ among the double balls
$\{B(x_n,2\rho(x_n))\}_{n\in \N}$ overlap.
\end{prop}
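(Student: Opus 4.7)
The plan is to prove this by a Besicovitch/Vitali-type selection adapted to variable radii: shrink every ball by a factor $\lambda=\lambda(k)\in(0,1)$, extract a maximal pairwise disjoint subfamily of the shrunken balls via Zorn's lemma, and then exploit the Lipschitz control on $\rho$ to upgrade maximality into both a covering property and a bounded-overlap property for the original (and double) balls. Since $\R^N$ is second countable, any pairwise disjoint family of open balls is automatically at most countable, so the maximal family can be indexed as $\{x_n\}_{n\in\N}$.

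First I would fix $\lambda=1/(2+k)$, which is admissible since $k<1/2$. To verify that $\{B(x_n,\rho(x_n))\}_{n\in\N}$ covers $\R^N$, I pick any $y\in\R^N$: maximality forces some $B(x_n,\lambda\rho(x_n))$ to meet $B(y,\lambda\rho(y))$, hence $|x_n-y|\le \lambda\bigl(\rho(x_n)+\rho(y)\bigr)$. Combining this with the Lipschitz estimate $\rho(y)\le\rho(x_n)+k|x_n-y|$ and rearranging yields
\[
|x_n-y|\le\frac{2\lambda}{1-\lambda k}\,\rho(x_n)=\rho(x_n),
\]
by the choice of $\lambda$, so $y\in B(x_n,\rho(x_n))$.

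For the overlap bound, I fix $y\in\R^N$ and set $I_y=\{n\in\N:y\in B(x_n,2\rho(x_n))\}$. The Lipschitz hypothesis gives $|\rho(y)-\rho(x_n)|\le 2k\rho(x_n)$ for $n\in I_y$, whence
\[
(1-2k)\rho(x_n)\le\rho(y)\le(1+2k)\rho(x_n),
\]
so the radii $\rho(x_n)$, $n\in I_y$, are all comparable to $\rho(y)$ with constants depending only on $k$. Consequently each disjoint shrunken ball $B(x_n,\lambda\rho(x_n))$, $n\in I_y$, is contained in a fixed ball $B(y,(2+\lambda)\rho(x_n))\subset B(y,C(k)\rho(y))$ and has Lebesgue volume at least $c(k)\rho(y)^N$. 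Summing the disjoint volumes and dividing by the total yields $\#I_y\le\zeta(N,k)$, which is the desired bound.

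The main obstacle is calibrating $\lambda$ against two competing requirements: it must be small enough, namely $\lambda\le 1/(2+k)$, for the maximality argument to force each $y$ into some $B(x_n,\rho(x_n))$, yet large enough that the volume comparison in the overlap step produces a constant $\zeta$ depending only on $N$ and $k$ rather than one that blows up as $k\uparrow 1/2$. The strict inequality $k<1/2$ is precisely what provides a quantitative safety margin for both constraints; if one tried to allow $k=1/2$, the lower bound $(1-2k)\rho(x_n)\le\rho(y)$ would collapse and the comparability of the radii, which powers the overlap count, would be lost.
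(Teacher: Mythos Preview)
The paper does not actually prove this proposition: it is quoted verbatim from \cite[Proposition~6.1]{cup-for} and used as a black box, so there is no ``paper's own proof'' to compare against. What matters, then, is whether your argument stands on its own, and it does. The Vitali-type selection you outline---shrink by $\lambda=1/(2+k)$, take a maximal disjoint subfamily of the shrunken balls, then use the Lipschitz bound on $\rho$ twice (once to get the covering, once to get radius comparability for the overlap count)---is the standard and correct route. Your computations check out; in particular the covering step gives the \emph{strict} inequality $|x_n-y|<\rho(x_n)$ because the intersecting shrunken balls are open, so $y$ lands in the open ball $B(x_n,\rho(x_n))$ as required.

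One remark on your closing paragraph: there is no genuine tension in choosing $\lambda$. Any fixed $\lambda\in(0,1/(2+k)]$ works for both steps, and the resulting $\zeta(N,k)$ is allowed to---and does---blow up as $k\uparrow 1/2$, since the statement only asks for a finite constant for each fixed $k<1/2$. The strict inequality $k<1/2$ is needed solely so that $(1-2k)>0$, which keeps the radii $\rho(x_n)$ for $n\in I_y$ bounded \emph{below} by a positive multiple of $\rho(y)$; that lower bound is what makes the volume-packing count finite. So your diagnosis of where $k<1/2$ enters is right, but the phrasing about $\lambda$ needing to be ``large enough'' to prevent blow-up is misleading and can be dropped.
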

To prove the main result of this section we need the following weighted gradient estimate.

\begin{lem}\label{lm:stima-gradiente}
Assume that $N>2,\,\alpha >2$ and $\beta >\alpha -2$. Then there exists a constant $C>0$ such that for every $u\in D_{p,max}(A)$ we have
\begin{equation}
\|(1+|x|^{\alpha-1})\nabla u\|_p\leq C (\|A_pu\|_p+\|u\|_p)\;.
\end{equation}
\end{lem}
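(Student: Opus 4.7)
The plan is a localization argument based on the covering result of Proposition \ref{pr:covering}. Set $\rho(x):=\delta(1+|x|)$ with $\delta\in (0,1/2)$ chosen small; this is Lipschitz with constant $\delta<1/2$, so Proposition \ref{pr:covering} yields a countable subcover $\{B(x_n,\rho(x_n))\}$ of $\R^N$ whose double balls $\{B(x_n,2\rho(x_n))\}$ have bounded overlap. For $\delta$ small enough, on the double ball $B(x_n,2\rho(x_n))$ one has $1+|x|\simeq 1+|x_n|$, hence also $1+|x|^\alpha\simeq 1+|x_n|^\alpha$ and $1+|x|^{\alpha-1}\simeq 1+|x_n|^{\alpha-1}$, uniformly in $n$.

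On each such ball I would apply the standard interior Calderón--Zygmund/gradient estimate for the Laplacian (by rescaling to the unit ball)
\[
\|\nabla u\|_{L^p(B(x_n,\rho(x_n)))}\le C\rho(x_n)\|\Delta u\|_{L^p(B(x_n,2\rho(x_n)))}+\frac{C}{\rho(x_n)}\|u\|_{L^p(B(x_n,2\rho(x_n)))},
\]
which is legitimate because $u\in W^{2,p}_{loc}(\R^N)$ by Corollary \ref{w2p}. Since $\Delta u=(Au+Vu)/(1+|x|^\alpha)$, multiplying through by the constant $1+|x_n|^{\alpha-1}$ and using the equivalences on the ball, the diffusion factor gives the bound
\[
(1+|x_n|^{\alpha-1})\,\frac{\rho(x_n)}{1+|x_n|^\alpha}\le C,
\]
so the top-order term produces a contribution controlled by $\|Au\|_{L^p(B(x_n,2\rho(x_n)))}+\||\cdot|^{\beta}u\|_{L^p(B(x_n,2\rho(x_n)))}$. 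The low-order term picks up the factor $(1+|x_n|^{\alpha-1})/\rho(x_n)\simeq 1+|x_n|^{\alpha-2}$, and here the assumption $\beta>\alpha-2$ is crucial: $1+|x_n|^{\alpha-2}\le C(1+|x_n|^{\beta})$, so this contribution is also absorbed into $\|u\|_{L^p(B(x_n,2\rho(x_n)))}+\||\cdot|^\beta u\|_{L^p(B(x_n,2\rho(x_n)))}$.

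Raising to the $p$-th power, summing over $n$, and exploiting the finite overlap of $\{B(x_n,2\rho(x_n))\}$ yields
\[
\|(1+|\cdot|^{\alpha-1})\nabla u\|_p\le C\bigl(\|A_p u\|_p+\||\cdot|^\beta u\|_p+\|u\|_p\bigr).
\]
Finally, the $L^p$-bound $\||\cdot|^\beta u\|_p\le C\|A_pu\|_p$ from Proposition \ref{invertibile} (with $\gamma=\beta$) absorbs the middle term, giving the claimed inequality. The main obstacle is the careful bookkeeping of the weights on each ball---checking that $1+|x|\simeq 1+|x_n|$ holds uniformly on double balls and that both the leading and the lower-order contributions telescope into quantities already controlled by $A_p u$ and $u$; once one verifies that the exponents balance thanks to the precise relation $\alpha-2<\beta$, the argument is essentially a summation over the cover.
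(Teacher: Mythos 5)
Your argument is correct and follows essentially the same route as the paper: a covering of $\R^N$ by balls $B(x_n,\rho(x_n))$ with $\rho(x)\simeq \delta(1+|x|)$ as in Proposition \ref{pr:covering}, a scaled interior estimate for $\nabla u$ in terms of $\Delta u$ and $u$ on each ball, the equivalence $1+|x|\simeq 1+|x_n|$ on the double balls, summation over the cover using the finite overlap, and finally absorption of $\||\cdot|^{\beta}u\|_p$ via Proposition \ref{invertibile}. The only (cosmetic) difference is that the paper applies the interpolation inequality $\|\nabla v\|_p\le C\|v\|_p^{1/2}\|\Delta v\|_p^{1/2}$ to $\vartheta_\rho u$ and must then absorb an $\varepsilon$-weighted gradient term after summing, whereas you invoke the scale-invariant interior gradient estimate directly, which sidesteps that absorption step.
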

\begin{proof}
Let $u\in D_{p,max}(A)$.
We fix $x_0\in \R^n$ and choose $\vartheta\in C_c^{\infty }(\R^N)$ such that $0\leq \vartheta \leq 1$, $\vartheta(x)=1$ for $x\in B(1)$ and $\vartheta(x)=0$ for $x\in \R^N \setminus B(2)$. Moreover, we set $\vartheta_\rho(x)=\vartheta \left(\frac{x-x_0}{\rho}\right)$, where $\rho=\frac{1}{4}(1+|x_0|)$.
We apply the well-known inequality
\begin{equation}\label{interpolation}
\|\nabla v\|_{L^p(B(R))}\leq C\|v\|^{1/2}_{L^p(B(R))}
\|\Delta v\|^{1/2}_{L^p(B(R))},\;\;v\in W^{2,p}(B(R))\cap W^{1,p}_0 (B(R)),\;R>0
\end{equation}
to the function $\vartheta_\rho u$ and obtain for every $\varepsilon>0$,
\begin{align*}
&\|(1+|x_0|)^{\alpha -1}\nabla u\|_{L^p(B(x_0,\rho))} \leq \|(1+|x_0|)^{\alpha -1}\nabla (\vartheta _\rho u)\|_{L^p(B(x_0,2\rho))}\\
&\quad    \leq C \|(1+|x_0|)^{\alpha} \Delta (\vartheta _\rho u)\|^{\frac{1}{2}}_{L^p(B(x_0,2\rho))}
      \|(1+|x_0|)^{\alpha -2}\vartheta _\rho u\|^{\frac{1}{2}}_{L^p(B(x_0,2\rho))}\\
& \quad \leq C \left(
    \varepsilon \|(1+|x_0|)^{\alpha} \Delta (\vartheta _\rho u)\|_{L^p(B(x_0,2\rho))}+
  \frac{1}{4\varepsilon}\|(1+|x_0|)^{\alpha -2}\vartheta _\rho u\|_{L^p(B(x_0,2\rho))}
 \right)\\
&\quad \leq C \left(
    \varepsilon \|(1+|x_0|)^{\alpha} \Delta u\|_{L^p(B(x_0,2\rho))}
     +\frac{2M}{\rho}\varepsilon \|(1+|x_0|)^{\alpha}\nabla u\|_{L^p(B(x_0,2\rho))}\right. \\
&\qquad \left.
    +\frac{M}{\rho^2}\| (1+|x_0|)^{\alpha}u\|_{L^p(B(x_0,2\rho))}
	  +\frac{1}{4\varepsilon}\|(1+|x_0|)^{\alpha -2} u\|_{L^p(B(x_0,2\rho))}
 \right)\\
&\quad \leq C \left(
    \varepsilon \|(1+|x_0|)^{\alpha} \Delta u\|_{L^p(B(x_0,2\rho))}
     +8M \varepsilon \|(1+|x_0|)^{\alpha-1}\nabla u\|_{L^p(B(x_0,2\rho))}\right.\\
&\qquad \left.+\left( 16M+\frac{1}{4\varepsilon}\right) \|(1+|x_0|)^{\alpha -2} u\|_{L^p(B(x_0,2\rho))}
      \right)\\
&\quad \leq C(M) \left(
    \varepsilon \|(1+|x_0|)^{\alpha} \Delta u\|_{L^p(B(x_0,2\rho))}
     + \varepsilon \|(1+|x_0|)^{\alpha-1}\nabla u\|_{L^p(B(x_0,2\rho))}\right.\\
&\qquad \left.+ \frac{1}{\varepsilon} \|(1+|x_0|)^{\alpha -2} u\|_{L^p(B(x_0,2\rho))}
      \right),
\end{align*}
where $M=\|\nabla \vartheta \|_{\infty }+\|\Delta \vartheta\|_{\infty }$.
Since  $2\rho=\frac{1}{2}(1+|x_0|)$ we get
\[
\frac{1}{2}(1+|x_0|)\leq 1+|x|\leq \frac{3}{2}(1+|x_0|),\qquad x\in B(x_0,2\rho).
\]
Thus
\begin{align}\label{eq:cover-x0}
&\|(1+|x|)^{\alpha -1}\nabla u\|_{L^p(B(x_0,\rho))}\leq \left(\frac 32 \right)^{\alpha -1}\|(1+|x_0|)^{\alpha -1}\nabla u\|_{L^p(B(x_0,\rho))} \nonumber \\
& \quad \leq C\left(
    \varepsilon \|(1+|x_0|)^{\alpha} \Delta u\|_{L^p(B(x_0,2\rho))}
     + \varepsilon \|(1+|x_0|)^{\alpha-1}\nabla u\|_{L^p(B(x_0,2\rho))}\right.\nonumber \\
&\qquad \left.+\frac{1}{\varepsilon} \|(1+|x_0|)^{\alpha -2}u\|_{L^p(B(x_0,2\rho))}
      \right)\;\nonumber \\
& \quad \leq C \left(
    2^\alpha \varepsilon \|(1+|x|)^{\alpha} \Delta u\|_{L^p(B(x_0,2\rho))}
     +2^{\alpha-1} \varepsilon \|(1+|x|)^{\alpha-1}\nabla u\|_{L^p(B(x_0,2\rho))}\right.\nonumber \\
&\qquad \left.+\frac{2^{\alpha-2}}{\varepsilon} \|(1+|x|)^{\alpha -2}u\|_{L^p(B(x_0,2\rho))}
      \right).
\end{align}
Let $\{B(x_n,\rho(x_n ))\}$ be a countable covering of $\R^N$ as in Proposition \ref{pr:covering}
such that at most $\zeta $ among the double balls $\{B(x_n,2\rho(x_n ))\}$ overlap.

We write \eqref{eq:cover-x0} with $x_0$ replaced by $x_n$ and sum over $n$.
To the limit as n tends to infinity, taking into account the covering result above, we get

we get
\begin{align*}
\|(1+|x|)^{\alpha -1}\nabla u\|_{p} \leq & \ C
    \big( \varepsilon \|(1+|x|)^{\alpha} \Delta u\|_{p}
     +\varepsilon \|(1+|x|)^{\alpha-1}\nabla u\|_{p}\\&+\frac{1}{\varepsilon} \|(1+|x|)^{\alpha -2}u\|_{p}
     \big)\;.\nonumber
\end{align*}
Choosing $\varepsilon$ such that $\varepsilon C<1/2$ we have
\begin{align*}
\frac{1}{2}\|(1+|x|)^{\alpha -1}\nabla u\|_{p} \leq \frac{1}{2} \|(1+|x|)^{\alpha} \Delta u\|_{p}+\frac{C}{\varepsilon} \|(1+|x|)^{\alpha -2}u\|_{p}
     \;.\nonumber
\end{align*}
It follows from Corollary \ref{w2p} that $\||x|^{\alpha-2}u\|_p\leq \|(1+|x|^\beta)u\|_p \leq \|u\|_p+\|Vu\|_p\leq \|u\|_p+C\|A_pu\|_p$ for every $u\in D_{p,max}(A)$ and some $C>0$. Hence,
$$\|(1+|x|)^{\alpha -1}\nabla u\|_{p} \leq  C(\|A_pu\|_p+\|u\|_p)$$ for all $u\in D_{p,max}(A)$. This ends the proof of the lemma.
\end{proof}

The following lemma shows that $C_c^\infty(\R^N)$ is a core for $A_p$.
\begin{lem}
Assume $N>2,\,\alpha >2$ and $\beta>\alpha-2$. The space $C_c^\infty(\R^N)$ is dense in $D_{p,max}(A)$ with respect to the graph norm.
\end{lem}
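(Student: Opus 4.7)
The plan is a standard truncate-and-mollify argument, with the two nontrivial ingredients being the \emph{a priori} weighted bounds on $u$ and $\nabla u$ coming from Proposition \ref{invertibile} and Lemma \ref{lm:stima-gradiente}. Fix $\eta\in C_c^\infty(\R^N)$ with $\eta\equiv 1$ on $B(1)$, $\supp\eta\subset B(2)$, $0\le\eta\le 1$, and put $\eta_n(x)=\eta(x/n)$, so that $\|\nabla\eta_n\|_\infty\le C/n$ and $\|\Delta\eta_n\|_\infty\le C/n^2$, both supported in the annulus $A_n=\{n\le|x|\le 2n\}$. Given $u\in D_{p,max}(A)$, set $u_n=\eta_n u$. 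By Corollary \ref{w2p} we have $u\in W^{2,p}(\R^N)$, so each $u_n$ lies in $W^{2,p}(\R^N)$ with compact support.

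A direct computation yields the commutator identity
\begin{equation*}
Au-Au_n=(1-\eta_n)Au-(1+|x|^\alpha)\bigl(u\,\Delta\eta_n+2\nabla\eta_n\cdot\nabla u\bigr),
\end{equation*}
and clearly $u_n\to u$ in $L^p$ by dominated convergence. For the first term on the right, $(1-\eta_n)Au\to 0$ in $L^p$ by dominated convergence since $Au\in L^p$. The remaining two contributions are supported in $A_n$, and there the factor $1+|x|^\alpha$ is comparable to $|x|^\alpha$. Combined with $\|\nabla\eta_n\|_\infty\le C/n$ and $\|\Delta\eta_n\|_\infty\le C/n^2$, together with the bounds $n^{\alpha-2}\le|x|^{\alpha-2}$ and $n^{\alpha-1}\le|x|^{\alpha-1}$ on $A_n$, they are dominated pointwise (on $A_n$) by constant multiples of $|x|^{\alpha-2}|u|$ and $|x|^{\alpha-1}|\nabla u|$ respectively.

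Here is where the earlier estimates enter. Since $\alpha-2<\beta$ and $\alpha-2>0$, Proposition \ref{invertibile} applied with $\gamma=\alpha-2$ gives $|x|^{\alpha-2}u\in L^p(\R^N)$, and Lemma \ref{lm:stima-gradiente} gives $(1+|x|^{\alpha-1})\nabla u\in L^p(\R^N)$. Both global integrable dominants allow dominated convergence on the shrinking-support sets $A_n$, so
\begin{equation*}
\|(1+|x|^\alpha)u\,\Delta\eta_n\|_p+\|(1+|x|^\alpha)\nabla\eta_n\cdot\nabla u\|_p\xrightarrow[n\to\infty]{}0,
\end{equation*}
whence $Au_n\to Au$ in $L^p$ as well. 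Thus $u_n\to u$ in the graph norm of $A_p$.

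It remains to approximate each $u_n$ by a $C_c^\infty$ function. Since $u_n\in W^{2,p}(\R^N)$ has support in the fixed ball $B(2n)$, a standard mollification $u_n^\varepsilon=\rho_\varepsilon*u_n$ belongs to $C_c^\infty(\R^N)$ for small $\varepsilon$, converges to $u_n$ in $W^{2,p}(\R^N)$, and has support in $B(2n+1)$. On this fixed ball the coefficients $1+|x|^\alpha$ and $|x|^\beta$ of $A$ are bounded, so $Au_n^\varepsilon\to Au_n$ in $L^p$ as $\varepsilon\to 0$. A diagonal extraction finishes the proof. The sole delicate step is the control of the gradient commutator term $(1+|x|^\alpha)\nabla\eta_n\cdot\nabla u$, which is precisely the reason the weighted gradient estimate of Lemma \ref{lm:stima-gradiente} had to be proved first.
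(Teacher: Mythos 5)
Your proof is correct and follows essentially the same route as the paper: cut off with $\eta_n(x)=\eta(x/n)$, control the commutator terms $(1+|x|^\alpha)(u\,\Delta\eta_n+2\nabla\eta_n\cdot\nabla u)$ on the annuli $\{n\le|x|\le 2n\}$ via the weighted bounds $\||x|^{\alpha-2}u\|_p\le C(\|A_pu\|_p+\|u\|_p)$ from Proposition \ref{invertibile} and $\|(1+|x|^{\alpha-1})\nabla u\|_p\le C(\|A_pu\|_p+\|u\|_p)$ from Lemma \ref{lm:stima-gradiente}, then mollify the compactly supported truncations where the coefficients are bounded. The only difference from the paper is the order of the two steps (the paper mollifies first on $W^{2,p}_c$, then truncates), which is immaterial.
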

\begin{proof} Let us first observe that $C_c^\infty(\R^N) $ is dense in
$W_c^{2,p}(\R^N)$ with respect to the operator norm.
Let $u\in W_c^{2,p}(\R^N)$ and consider $u_n=\rho_n\ast u$,
where $\rho_n$ are standard mollifiers.
We have $u_n\in C_c^\infty(\R^N)$,
$u_n\to u$ in $L^p(\R^N)$ and
$D^2u_n\to D^2u$ in $L^p(\R^N)$.
Moreover, $\supp u_n\subset \supp
u+B(1):=K$ for any $n\in \N$. Then
\begin{align*}
&\|A_pu-Au_n\|_p=\|A_pu-Au_n\|_{L^p(K)}\\
&\quad\leq \|(1+|x|^\alpha)\Delta (u-u_n)\|_{L^p(K)}+\||x|^\beta (u-u_n)\|_{L^p(K)}\\
&\quad \leq \|(1+|x|^\alpha)\|_{L^\infty(K)}\|\Delta (u-u_n)\|_{L^p(K)}+\||x|^\beta\|_{L^\infty(K)}\| (u-u_n)\|_{L^p(K)}
\to 0 \text{ as }n\to \infty\;.
\end{align*}

Now, let $u$ in $D_{p,max}(A)$ and let $\eta$ be a
smooth function such that $\eta=1$ in $B(1)$, $\eta=0$ in
$\R^N\setminus B(2)$, $0\leq \eta\leq 1$ and set
$\eta_n(x)=\eta\left(\frac{x}{n}\right)$. Then consider
$u_n=\eta_n u\in W_c^{2,p}(\R^N)$.
First we have
$u_n\to u$ in $L^p(\R^N)$ by dominated convergence.
As regard $A_pu_n$ we have
\begin{align*}
&A_pu_n(x)=(1+|x|^\alpha)\Delta (\eta_n u)(x)-|x|^\beta \eta_n(x) u(x)\\
&\quad =\eta_n(x)A_pu(x)+2(1+|x|^\alpha) \nabla \eta_n(x)\nabla u(x)+(1+|x|^\alpha)\Delta \eta_n (x) u(x)\\
&\quad =\eta_n(x)A_pu(x)+\frac{2}{n}(1+|x|^\alpha) \nabla \eta\left(\frac{x}{n}\right)\nabla u(x)
  +\frac{1}{n^2}(1+|x|^\alpha)\Delta \eta \left(\frac{x}{n}\right) u(x)\;,
\end{align*}
and
$$\eta _nA_pu\to A_pu \qquad \hbox{in}\qquad  L^p(\R^N)$$
by dominated convergence. As regards the lasts terms we
consider that  $\nabla \eta (x/n)$  and $\Delta \eta (x/n)$ can be different from zero only for $n \le
|x| \le 2n$, then we have
$$\frac{1}{n}(1+|x|^{\alpha})\left|\nabla\eta\left(\frac{x}{n}\right)\right| | \nabla u|
\leq C(1+|x|^{\alpha-1})|\nabla u|\chi_{\{n\leq |x|\leq 2n\}},$$
and
$$\frac{1}{n^2}(1+|x|^{\alpha})\left|\Delta \eta\left(\frac{x}{n}\right)\right| |u|
\leq C(1+|x|^{\alpha-2})|u|\chi_{\{n\leq |x|\leq 2n\}}\,.$$
The right hand side tends to $0$ as $n\to\infty$,
since by Proposition \ref{invertibile} and Lemma \ref{lm:stima-gradiente} we have
$\|(1+|x|^{\alpha-2})u\|_p\leq C(\|A_pu\|_p+\|u\|_p)$ and $\|(1+|x|^{\alpha-1})\nabla u\|_p\leq C(\|A_pu\|_p+\|u\|_p)$.
\end{proof}

Let us give now the main result of this section.

\begin{teo}
Assume $N>2,\,\alpha>2$ and $\beta>\alpha-2$. Then the operator $A_p$ with domain $D_{p,max}(A)$ generates an analytic  semigroup in $L^p(\R^N)$.
\end{teo}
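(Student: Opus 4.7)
My plan is to establish that $A_p$ is sectorial. Having already shown in the previous lemma that $C_c^\infty(\R^N)$ is a core, in Theorem \ref{risolvente} that $[0,\infty)\subset\rho(A_p)$, and in Corollary \ref{w2p} that the domain coincides with $\{u\in W^{2,p}(\R^N):Au\in L^p(\R^N)\}$, it suffices to prove a resolvent estimate of the form $|\lambda|\,\|u\|_p \leq C\,\|(\lambda + \omega - A_p)u\|_p$ for all $u\in C_c^\infty(\R^N)$ and all $\lambda$ in some sector $\Sigma_\theta = \{\lambda\in\C\setminus\{0\}: |\arg\lambda| < \pi/2+\theta\}$ with $\theta>0$, and a suitable shift $\omega\geq 0$. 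This shows $A_p - \omega$ sectorial, hence $A_p$ generates an analytic semigroup.

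The central estimate is the classical one obtained by pairing the equation $\lambda u - A_p u = f$ with the duality map $\bar u |u|^{p-2}$ and integrating by parts, which is legitimate since $u\in C_c^\infty(\R^N)$. Using the standard identities $\Re(\bar u\,\nabla u)=|u|\,\nabla|u|$ and $|\Im(\bar u\,\nabla u)|^2 = |u|^2(|\nabla u|^2-|\nabla|u||^2)$, one obtains an integral identity whose real part controls the coercive quantity $\int a|u|^{p-2}|\nabla u|^2\,dx + \int V|u|^p\,dx$ (with constant $\min(1,p-1)$), while the imaginary cross term produced by the $(p-2)$-correction is pointwise bounded, via Cauchy-Schwarz, by $\tfrac{|p-2|}{2}\int a|u|^{p-2}|\nabla u|^2\,dx$. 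Separating real and imaginary parts of the identity and applying H\"older's inequality on the right-hand side then yields the desired sectoriality estimate in the spirit of Okazawa.

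The main technical obstacle is the unbounded first-order drift term $\int(\nabla a\cdot\nabla u)\bar u|u|^{p-2}\,dx$ produced by the integration by parts from the non-divergence-form structure of $A_p$, since $|\nabla a|=\alpha|x|^{\alpha-1}$ grows. I would absorb it through a weighted Young inequality,
\[
\Bigl|\int(\nabla a\cdot\nabla u)\bar u|u|^{p-2}\,dx\Bigr| \leq \eps\!\int a|u|^{p-2}|\nabla u|^2\,dx + C_\eps\!\int\frac{|\nabla a|^2}{a}|u|^p\,dx,
\]
and then use the bound $|\nabla a|^2/a \leq C(1+|x|^{\alpha-2})$. It is precisely here that the sharp hypothesis $\beta>\alpha-2$ is exploited: it gives $|\nabla a|^2/a \leq \eta V + C_\eta$ for arbitrarily small $\eta>0$, allowing complete absorption into $\int V|u|^p\,dx$ at the cost of only a lower-order $\|u\|_p^p$ term, which is fixed by the shift $\omega$. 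Combining this absorption with the closedness of $A_p$, the identification of the domain in Corollary \ref{w2p}, and the density of $C_c^\infty(\R^N)$ from the previous lemma, the sectoriality estimate extends from $C_c^\infty(\R^N)$ to $D_{p,max}(A)$, proving that $A_p$ generates an analytic semigroup on $L^p(\R^N)$.
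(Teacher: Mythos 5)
Your proposal is correct in substance, and the central analytic estimate is the same as the paper's: both pair the equation with the duality map $\bar u|u|^{p-2}$, integrate by parts (for $1<p<2$ this needs the formula of \cite{met-spi}, which you should cite), and exploit $\beta>\alpha-2$ precisely to absorb the first-order term produced by the non-divergence structure of $A$. Where you genuinely diverge is in the functional-analytic packaging and in the treatment of that first-order term. The paper never works on $C_c^\infty(\R^N)$: it proves sectorial dissipativity for the Dirichlet realizations on the balls $B(\rho)$ with constants independent of $\rho$, solves the approximating problems for every $\lambda\in\Sigma_\theta$, and passes to the limit $\rho\to\infty$ via interior $L^p$ estimates, weak compactness and a diagonal argument, concluding with an open-closed continuation argument for invertibility on the whole sector. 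You instead prove the a priori estimate on the core $C_c^\infty(\R^N)$, extend it to $D_{p,max}(A)$ by the density lemma, and import surjectivity from Theorem \ref{risolvente}; given that $[0,+\infty)\subset\rho(A_p)$ and the core property are already established, this is a legitimate and arguably shorter route, but you should make the continuation step explicit: an a priori resolvent estimate alone does not place $\Sigma_\theta$ in $\rho(A_p-\omega)$, and one needs surjectivity at one point together with openness of the resolvent set and the uniform bound $C_\theta/|\lambda|$, exactly as at the end of the paper's proof. As for the drift term, the paper computes its real part exactly, writing $|u_\rho|^{p-2}Re(\ov{u}_\rho\nabla u_\rho)=\frac{1}{p}\nabla(|u_\rho|^p)$ and integrating by parts to produce $\frac{\alpha(N+\alpha-2)}{p}|x|^{\alpha-2}$, which is dominated by $|x|^\beta+\omega$ with a margin $\tilde c\,|x|^{\alpha-2}$ that is then used to control the imaginary part by Cauchy--Schwarz; your weighted Young inequality with $|\nabla a|^2/a\le C(1+|x|^{\alpha-2})\le \eta V+C_\eta$ is cruder but reaches the same conclusion, at the price of a less explicit sector angle. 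Both absorptions use the hypothesis $\beta>\alpha-2$ at exactly the same point.
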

\begin{proof}
Let $f\in L^p$, $\rho>0$.
Consider the operator $\widetilde{A_p}:=A_p-\omega$ where
$\omega$ is a constant which will be chosen later. It is known that the elliptic problem
in $L^p(B(\rho))$
\begin{equation}\label{palla}
\left\{
\begin{array}{ll}
\lambda u- \widetilde{A_p}u=f&\text{ in }B(\rho)\\
u=0&\text{ on }\partial B(\rho)\;,
\end{array}
\right.
\end{equation}
admits a unique solution $u_\rho$ in $W^{2,2}(B(\rho))\cap W_0^{1,2}(B(\rho))$ for $\lambda >0$,
(cf. \cite[Theorem 9.15]{gil-tru}).

Let us prove that that $e^{\pm i\theta}\widetilde{A_p}$ is dissipative in $B(\rho)$ for $0\le \theta \le \theta_\alpha$ with suitable $\theta_\alpha \in (0,\frac{\pi}{2}]$. To this purpose
observe that
\[
\widetilde{A_p}u_\rho={\rm div}\left((1+|x|^\alpha)\nabla u_\rho \right) -\alpha |x|^{\alpha-1}\frac{x}{|x|}\nabla u_\rho-|x|^\beta u_\rho-\omega u_\rho\,.
\]
Set $u^\star=\overline u_{\rho}|u_\rho|^{p-2}$ and recall that $a(x)=1+|x|^\alpha$. Multiplying $\widetilde{A_p}u_\rho$ by $u^\star$ and integrating over $B(\rho)$, we obtain
\begin{eqnarray*}
& &\int_{B(\rho)}\widetilde{A_p} u_\rho\, u^\star
dx=-\int_{B(\rho)}a(x)|u_\rho|^{p-4}|Re(\ov{u}_\rho\nabla
u_\rho)|^2dx\\
& & - \int_{B(\rho)}a(x)|u_\rho|^{p-4}|Im(\ov{u}_\rho\nabla
u_\rho)|^2dx- (p-2)\int_{B(\rho)}a(x)|u_\rho|^{p-4}
\ov{u}_\rho\nabla u_\rho Re(\ov{u}_\rho\nabla u_\rho)dx\\
& & -\alpha\int_{B(\rho)}\ov{u}_\rho|u_\rho|^{p-2}|x|^{\alpha-1}\frac{x}{|x|}\nabla u_\rho\,dx
-\int_{B(\rho)} \left(|x|^\beta+\omega\right) |u_\rho|^{p}dx.
\end{eqnarray*}
We note here that the integration by part in the singular case $1<p<2$ is allowed thanks to \cite{met-spi}.
By taking the real and imaginary part of the left and the right hand
side, we have
\begin{eqnarray*}
& & Re\left( \int_{B(\rho)}\widetilde{A_p}u_\rho\, u^\star dx  \right)\\
&=&
  -(p-1)\int_{B(\rho)}a(x)|u_\rho|^{p-4}|Re(\ov{u}_\rho\nabla u_\rho)|^2dx-\int_{B(\rho)}a(x)|u_\rho|^{p-4}|Im(\ov{u}_\rho\nabla u_\rho)|^2dx\\
& &\qquad
    -\alpha\int_{B(\rho)}|u_\rho|^{p-2}|x|^{\alpha-1}\frac{x}{|x|}Re(\ov{u}_\rho\nabla u_\rho)\,dx
    -\int_{B(\rho)} \left(|x|^\beta+\omega\right) |u_\rho|^{p}dx\\
&=& -(p-1)\int_{B(\rho)}a(x)|u_\rho|^{p-4}|Re(\ov{u}_\rho\nabla u_\rho)|^2dx-
    \int_{B(\rho)}a(x)|u_\rho|^{p-4}|Im(\ov{u}_\rho\nabla u_\rho)|^2dx\\
& &\qquad
    -\frac{ \alpha }{p}\int_{B(\rho)}|x|^{\alpha-1}\frac{x}{|x|}\nabla (|u_\rho|^p)\, dx
    -\int_{B(\rho)} \left(|x|^\beta+\omega\right) |u_\rho|^{p}dx\\
&=& -(p-1)\int_{B(\rho)}a(x)|u_\rho|^{p-4}|Re(\ov{u}_\rho\nabla u_\rho)|^2dx
    -\int_{B(\rho)}a(x)|u_\rho|^{p-4}|Im(\ov{u}_\rho\nabla u_\rho)|^2dx\\
& &\qquad + \int_{B(\rho)} \left( \frac{ \alpha(N-2+\alpha) }{p}|x|^{\alpha-2}-|x|^\beta-\omega \right)|u_\rho|^pdx
\end{eqnarray*}
and
\begin{eqnarray*}
Im\bigg(\int_{B(\rho)}\widetilde{A_p}u_\rho\, u^\star
dx\bigg) &=& -(p-2)\int_{B(\rho)}a(x)|u_\rho|^{p-4}Im(\ov{u}_\rho\nabla
u_\rho) Re(\ov{u}_\rho\nabla
u_\rho)\,dx\\
& & \quad -\alpha\int_{B(\rho)}|u_\rho|^{p-2}|x|^{\alpha-1}\frac{x}{|x|}Im(\ov{u}_\rho\nabla u_\rho)\,dx.
\end{eqnarray*}
We can choose $\tilde c>0$ and $\omega>0$ (depending on $\tilde c$) such that $$ \frac{ \alpha(N-2+\alpha) }{p}|x|^{\alpha-2}-|x|^\beta-\omega   \leq -\tilde c |x|^{\alpha-2}.$$
So, we obtain
\begin{eqnarray*}
-Re\left( \int_{B(\rho)}\widetilde{A_p}u_\rho\, u^\star dx  \right)
&\geq &(p-1)\int_{B(\rho)}a(x)|u_\rho|^{p-4}|Re(\ov{u}_\rho\nabla u_\rho)|^2dx
    \\ & &
 \quad +\int_{B(\rho)}a(x)|u_\rho|^{p-4}|Im(\ov{u}_\rho\nabla u_\rho)|^2dx
+\tilde c \int_{B(\rho)}|u_\rho|^{p}|x|^{\alpha-2}dx
\\ & &
=(p-1)B^2+C^2+\tilde c D^2.
\end{eqnarray*}
Moreover,
\begin{eqnarray*}
& & \left|Im \left(\int_{B(\rho)}\widetilde{A_p} u_\rho\, u^\star dx\right)\right|\\
&\leq & |p-2|\left(\int_{B(\rho)}|u_\rho|^{p-4}a(x)|Re(\ov{u}_\rho\nabla u_\rho)|^2 dx\right)^\frac{1}{2}
    \left(\int_{B(\rho)}|u_\rho|^{p-4}a(x)|Im(\ov{u}_\rho\nabla u_\rho)|^2 dx\right)^\frac{1}{2}\\
& &\qquad
  +\alpha\left(\int_{B(\rho)}|u_\rho|^{p-4}|x|^{\alpha}|Im(\ov{u}_\rho\nabla u_\rho)|^2\; dx\right)^\frac{1}{2}\left(\int_{B(\rho)}|u_\rho|^{p}|x|^{\alpha-2}\; dx\right)^\frac{1}{2}\\
&=& |p-2|BC+\alpha CD,
\end{eqnarray*}
where
\begin{align*}
&B^2=\int_{B(\rho) }|u_\rho|^{p-4}a(x)|Re(\ov{u}_\rho\nabla u_\rho)|^2 dx,\\
&C^2=\int_{B(\rho)}|u_\rho|^{p-4}a(x)|Im(\ov{u}_\rho\nabla u_\rho)|^2
dx,\\
&D^2=\int_{B(\rho)}|u_\rho|^{p}|x|^{\alpha-2}\; dx.
\end{align*}

Let us observe that, choosing $\delta^2=\frac{|p-2|^2}{4(p-1)}+\frac{\alpha^2}{4\tilde c}$ (which is independent of $\rho$), we obtain
$$
\left|Im\bigg(\int_{B(\rho)}\widetilde{A_p} u_\rho\, u^\star dx\bigg)\right|
  \leq \delta \left\{-Re\bigg(\int_{B(\rho)}\widetilde{A_p}u_\rho\, u^\star dx\bigg)\right\}.
$$
If $\tan\theta_\alpha=\delta$, then $e^{\pm i\theta}\widetilde{A_p}$ is
dissipative in $B(\rho)$ for $0 \le \theta \le \theta_\alpha$.
From \cite[Theorem I.3.9]{pazy} follows that the problem
(\ref{palla}) has a unique solution $u_\rho$ for every $\lambda \in
\Sigma_\theta ,\,0 \le \theta <\theta_\alpha$ where
$$
\Sigma_\theta=\{\lambda \in \C\setminus \{0\}: |Arg\,  \lambda| <
\pi/2+\theta\}.
$$
Moreover, there exists a constant $C_\theta$ which is independent of $\rho$, such that
\begin{equation} \label{stima}
\|u_\rho\|_{L^p (B(\rho))} \le
\frac{C_\theta}{|\lambda|}\|f\|_{L^p},\quad \lambda \in \Sigma_\theta .
\end{equation}
Let us now fix  $\lambda \in \Sigma_\theta$, with $0<\theta <\theta_\alpha$ and a radius $r>0$.
We apply the interior $L^p$ estimates
(cf. \cite[Theorem 9.11]{gil-tru})  to the functions $u_\rho$ with $\rho >r+1$. So, by
(\ref{stima}) we have
\[
\|u_\rho\|_{W^{2,p}(B(r))}\leq C_1\left( \|\lambda u_\rho-\widetilde{A_p}u_\rho\|_{L^p(B(r+1))}+\|u_\rho\|_{L^p(B(r+1))}\right) \le C_2\|f\|_{L^p}.
\]
Using a weak compactness and a diagonal argument,
we can construct a sequence $(\rho_n) \to \infty$ such that the functions $(u_{\rho_n})$
converge weakly in $W^{2,p}_{loc}$ to a function $u$ which satisfies $\lambda u-\widetilde{A_p}u=f$ and
\begin{equation}
\label{stima-2}
 \|u\|_{p} \le
\frac{C_\theta}{|\lambda|}\|f\|_{p}, \quad \lambda \in \Sigma_\theta .
\end{equation}
Moreover, $u \in D_{p,max}(A_p)$.
We have now only to show
that $\lambda-\widetilde{A_p}$ is invertible on $D_{p,max}(A_p)$ for
$\lambda \in \Sigma_\theta$.
Consider the set
\[
E=\{r>0: \Sigma_\theta \cap C(r) \subset\rho (\widetilde{A_p})\},
\]
where $C(r):=\{\lambda \in \C : |\lambda |<r\}$.
Since, by Theorem \ref{risolvente}, $0$ is in the resolvent set of $\widetilde{A_p}$, then
$R=\sup E>0$.
On the other hand, the norm of
the resolvent is bounded by
$C_\theta/|\lambda|$ in $C(R) \cap \Sigma_\theta$, consequently  it cannot explode on the
boundary of $C(R)$, then $R=\infty$ and this ends the proof of the theorem.
\end{proof}

\begin{os}\label{os-regularity}
Since $A_p$ generates an analytic semigroup $T_p(\cdot)$ on $L^p(\R^N)$ and the semigroups $T_q(\cdot),\,q\in (1,\infty)$ are consistent, see Theorem \ref{risolvente}, one can deduces (as in the proof of \cite[Proposition 2.6]{lor-rhan}) using Corollary \ref{w2p} that $T_p(t)L^p(\R^N)\subset C^{1+\nu}_b(\R^N)$ for any $t>0,\,\nu \in (0,1)$ and any $p\in (1,\infty)$.
\end{os}

We end this section by studying the spectrum of $A_p$. We recall from Proposition \ref{invertibile}
that $$\||x|^\beta u\|_p\le C\|A_pu\|_p,\quad \forall u\in D_{p,max}(A).$$ So, arguing as in \cite{lor-rhan} we obtain the following results
\begin{prop} \label{comp}
Assume $N>2,\,\alpha>2$  and $\beta>\alpha-2$ then
\begin{enumerate}
\item[(i)] the resolvent of $A_p$ is compact in $L^p$;
\item[(ii)] the spectrum of $A_p$ consists of a sequence of negative real eigenvalues which accumulates at $-\infty$. Moreover, $\sigma(A_p)$
is independent of $p$;
\item[(iii)] the semigroup $T_p((\cdot)$ is irreducible, the eigenspace corresponding to the largest eigenvalue $\lambda_0$ of $A$ is one-dimensional and is spanned by strictly positive functions $\psi$,
 which is radial, belongs to $C_b^{1+\nu}(\R^N)\cap C^2(\R^N)$ for any $\nu \in (0,1)$ and tends to $0$ when $|x|\to \infty$.
\end{enumerate}
\end{prop}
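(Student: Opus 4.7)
The plan is to exploit the $L^p$ a priori estimate $\||x|^\beta u\|_p\le C\|A_pu\|_p$ from Proposition \ref{invertibile} to obtain compactness of the resolvent and decay of eigenfunctions, and then to invoke Krein--Rutman together with rotation invariance and elliptic regularity. For (i), given $(f_n)$ bounded in $L^p(\R^N)$, set $u_n=(-A_p)^{-1}f_n$. Corollary \ref{w2p} and Proposition \ref{invertibile} give $u_n$ bounded in $W^{2,p}(\R^N)$ with $\||x|^\beta u_n\|_p\le C\|f_n\|_p$. Rellich--Kondrachov applied on each ball $B(R)$ yields a subsequence converging in $L^p(B(R))$, while the weighted bound controls the tail uniformly via
\[
\int_{|x|>R}|u_n|^p\,dx\le R^{-p\beta}\int_{\R^N}|x|^{p\beta}|u_n|^p\,dx\le CR^{-p\beta},
\]
and a standard diagonal extraction delivers convergence in $L^p(\R^N)$.

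For (ii), compactness of the resolvent forces $\sigma(A_p)$ to consist of isolated eigenvalues of finite algebraic multiplicity accumulating only at infinity, and since $(T_p(t))$ is analytic and bounded the spectrum lies in a left half-plane, so the only possible accumulation is at $-\infty$. Reality and non-positivity of the eigenvalues would be deduced from the formal self-adjointness of $A$ on the weighted Hilbert space $L^2(\R^N,a^{-1}dx)$, into which any eigenfunction $u\in D_{p,max}(A)$ embeds thanks to regularity and decay. For the $p$-independence I would bootstrap as in the final part of the proof of Theorem \ref{risolvente}: Corollary \ref{w2p} gives $u\in W^{2,p}(\R^N)$ with $Vu\in L^p$, so Sobolev embedding (iterated if $p<N/2$) places $u$ in $C_b(\R^N)$; combined with $\||x|^\beta u\|_p<\infty$ this shows $u\in D_{q,max}(A)$ for every $q\in(1,\infty)$, so $\lambda\in\sigma(A_q)$.

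For (iii), irreducibility follows from the strong minimum principle for the parabolic problem, which yields strict positivity of the kernel of $T_p(t)$. Since $(\mu-A_p)^{-1}$ is compact and positive for $\mu>\lambda_0:=s(A_p)$, Krein--Rutman delivers geometric and algebraic simplicity of $\lambda_0$ together with a strictly positive eigenfunction $\psi$. Radiality comes from rotation invariance of $A$: for any orthogonal transformation $R$, $\psi\circ R$ is a positive eigenfunction for $\lambda_0$, hence a positive multiple of $\psi$, and positivity forces the multiplier to be $1$. The regularity $\psi\in C_b^{1+\nu}(\R^N)\cap C^2(\R^N)$ follows from Remark \ref{os-regularity} and Schauder estimates, while $\psi(x)\to 0$ as $|x|\to\infty$ follows from $\||x|^\beta\psi\|_p<\infty$ combined with the continuity of $\psi$.

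The step I expect to be most delicate is the $p$-independence of the spectrum, which requires climbing the integrability scale for eigenfunctions while tracking the weighted decay; the low-$p$ regime $p<N/2$ has to be handled by iterating Sobolev embedding exactly as in Theorem \ref{risolvente}, before one can invoke the consistency of the resolvents across $p$ to transfer eigenpairs between the $A_p$.
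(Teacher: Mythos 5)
Your proposal is correct and follows essentially the same route as the paper, which gives no written proof but simply invokes the weighted estimate $\||x|^\beta u\|_p\le C\|A_pu\|_p$ from Proposition \ref{invertibile} and refers to the analogous argument in \cite{lor-rhan}: compactness via Rellich--Kondrachov plus uniform tail control, discreteness and reality of the spectrum via symmetry in $L^2(a^{-1}dx)$, $p$-independence by Sobolev bootstrap, and Krein--Rutman with rotation invariance for (iii). The only cosmetic point is that the decay and the normalization $c(R)=1$ in the radiality step are most cleanly justified by the uniform continuity coming from $C_b^{1+\nu}$ and by evaluating $\psi\circ R=c(R)\psi$ at the origin (or by $L^p$-norm invariance), but these are exactly the details you indicate.
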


\end{document}